\documentclass[a4paper, english, 12pt, reqno, dvipsnames, final]{amsart}



\usepackage{amssymb}
\usepackage{a4wide}



\usepackage[english]{babel}
\usepackage[utf8]{inputenc}
\usepackage{multicol}
\usepackage{graphicx}
\usepackage{xcolor}
\usepackage{enumerate,enumitem}
\usepackage{booktabs,multirow}
\usepackage{amsfonts}
\usepackage{dsfont}
\usepackage{mathtools}
\usepackage{todonotes}
\usepackage{tikz,tikzscale}
\usepackage{pgfplots}
\usetikzlibrary{arrows.meta, angles, quotes}





\newtheorem{theorem}{Theorem}[section]
\newtheorem{lemma}[theorem]{Lemma}

\theoremstyle{definition}

\theoremstyle{remark}
\newtheorem{remark}[theorem]{Remark}

\numberwithin{equation}{section}


\let\rho\varrho

\newcommand{\Edge}{\ensuremath{E}}

\newcommand{\Node}{\ensuremath{N}}

\newcommand{\setEdge}{\ensuremath{\mathcal E_h}}
\newcommand{\setNode}{\ensuremath{\mathcal N_h}}

\newcommand{\edge}{\ensuremath{\Edge}}
\newcommand{\node}{\ensuremath{\Node}}

\newcommand{\trace}{\ensuremath{\gamma}}

\renewcommand{\div}{\ensuremath{\nabla\!\cdot\!}}

\newcommand{\Normal}{\mathbf n}

\newcommand{\IR}{\ensuremath{\mathbb R}}
\newcommand{\spaceH}{\ensuremath{\mathcal H}}
\newcommand{\spaceM}{\ensuremath{\mathcal M}}
\newcommand{\Hdiv}{\mathbf H^{\textup{div}}}

\newcommand{\polynomials}{\ensuremath{Q}}
\newcommand{\polynomialsp}{\ensuremath{P}}
\newcommand{\polynomialsRT}{\ensuremath{RT}}

\renewcommand{\vec}[1]{\mathbf{#1}}
\DeclareMathAlphabet{\mathbfsf}{\encodingdefault}{\sfdefault}{bx}{n}

\newcommand{\dx}{\ensuremath{\, \textup d x}}

\newcommand{\ds}{\ensuremath{\, \textup d \sigma}}
\newcommand{\dbx}{\ensuremath{\, \textup d\vec x}}



\usepackage[colorlinks = true, linkcolor = blue, citecolor = blue, urlcolor = blue]{hyperref}

\makeatletter
\newcommand\footnoteref[1]{\protected@xdef\@thefnmark{\ref{#1}}\@footnotemark}
\makeatother

\usepackage[backend=bibtex,doi=false,isbn=false, url=false,
eprint=false, giveninits=true,
style=numeric,defernumbers=false,sorting=none,style=numeric-comp,maxbibnames=99]{biblatex}
\addbibresource{hdg_positive.bib}


\begin{document}

\title[On positivity preservation of HDG methods]{On positivity preservation of hybrid discontinuous Galerkin methods on hypergraphs} 

\author{Petr Knobloch}
\address{Department of Numerical Mathematics, Faculty of Mathematics and
Physics, Charles University, Prague, Czech Republic}
\email{knobloch@karlin.mff.cuni.cz}

\author{Philip L.\ Lederer}
\address{Department of Mathematics, University of Hamburg, Germany}
\email{philip.lederer@uni-hamburg.de}

\author{Andreas Rupp}
\address{Department of Mathematics, Faculty of Mathematics and Computer Science, Saarland University, Saarbrücken, Germany}
\email{andreas.rupp@uni-saarland.de}



\begin{abstract}
Hybrid finite element methods, particularly hybridized discontinuous Galerkin (HDG) methods, are efficient numerical schemes for discretizing the diffusion equation, which encompasses two main physical principles: mass conservation and positivity preservation. While the former has been extensively analyzed in the literature, this paper investigates the latter. We state a theorem that guarantees the positivity of both the bulk and skeleton approximations to the primary unknown (concentration) and provide counterexamples for nonpositive discretizations. The theoretical findings are confirmed by numerical experiments.
 \\[1ex] \noindent \textsc{Keywords.}
 Positivity preservation, hybridized discontinuous Galerkin methods, diffusion equation, mass conservation, numerical analysis, Raviart-Thomas
\end{abstract}

\date{\today}
\maketitle
\section{Introduction}
Structure preserving discretizations of the diffusion equation need to be locally mass conservative and, in the case of 
nonnegative data, they should also preserve the positivity of the solution. This is particularly important in applications where the solution represents a physical quantity, such as temperature or concentration, which is nonnegative by definition. While there is plenty of literature on positivity preserving (continuous) finite element schemes for diffusion problems \cite{BarrenecheaGPV23,BBK17,BJK17,BarrenecheaJK24,BrandtsKK08,Ciarlet70,CiarletR73}, these finite elements usually come with a reduced notion of local mass conservation---the local fluxes can only be reconstructed in a fully discrete fashion \cite{Abgrall23,HughesEML00}. For comprehensive overviews over such methods, we refer to \cite{KeithS24,BarrenecheaJK24,BarrenecheaJK25}. In contrast, discontinuous Galerkin (DG) and enriched Galerkin (EG) methods allow for an enhanced notion of local conservation with their naturally defined inter- and intra-element fluxes, which can be postprocessed to obtain a locally conservative solution concerning that flow field. However, the literature on locally bound preserving DG and EG schemes for the diffusion equation is significantly more scarce and mainly addresses parabolic equations, see \cite{FrankRK20}, while many approaches in the hyperbolic world exist \cite{KuzminHR20,Kuzmin21,KuzminLY24,Hajduk21,Pazner21,ErnG22,GuermondBT19}. We refer to \cite{KuzminH24} for an extensive overview. Some of these approaches, such as slope limiters \cite{Krivodonova07,Kuzmin10,Kuzmin20}, can be transferred to diffusion problems.

The most direct approach to obtain local mass conservation for the diffusion
equation comprises dual, mixed methods, which discretize the flux explicitly.
However, these methods generate a linear system of equations with a
saddle-point structure and the authors are unaware of any results on positivity
preservation. A way to bypass the saddle-point system lies in the concept of
hybridization, which has been generalized to cover a large class of so-called
hybrid discontinuous Galerkin (HDG) methods in \cite{CockburnGL09}. These
methods are locally mass conservative not only for classical domains but even
if they discretize the diffusion equation posed on graphs or networks of
surfaces, which naturally appear as singular limits of thin structures \cite{RuppGK22}.

The positivity preservation of such HDG methods has not been fully established,
which is the focus of this contribution. Nonetheless, there are several
works that consider the maximum principles and positivity preservation of
hybrid (mixed) methods, which can be considered as one class of HDG methods in
which no stabilization is necessary since the respective approximation spaces ensure an inf-sup stable discretization. For example, \cite{HoteitMPAE03,YounesAL06,YounesAAB10} consider parabolic equations. In particular, \cite{HoteitMPAE03} exploits a time step restriction of the form \( (\Delta x)^2 / \Delta t  \le C \) to obtain positivity preservation, while \cite{YounesAL06} uses a mass-lumping approach ensuring positivity preservation if all angles are less than or equal to \(\tfrac\pi2 \) and the spatial dimension is two. Moreover, \cite{YounesAAB10} divides distorted quadrilateral elements into triangles to obtain an accurate and positivity preserving approximation in two spatial dimensions.

Elliptic problems discretized by hybrid mixed finite elements are investigated in \cite{BreraJMS10,Mazzia08,MichelettiSS01} for two-dimensional domains. While \cite{Mazzia08} considers a mesh consisting of a maximum of two elements with nonobtuse angles and proves that the lowest order Raviart--Thomas discretization is positivity preserving in that case (giving rise to the hope that this is also true in general), \cite{BreraJMS10} considers a mesh of nonobtuse triangles and ensures maximum principles even for heterogeneous equations (where the diffusion coefficient can jump among subdomains which need to be exactly resolved---a specialized quadrature is necessary here). Finally, \cite{MichelettiSS01} demonstrates that for Delaunay-type triangulations, the relations between cell-centered finite volumes and lowest order (hybrid) mixed methods can be exploited to generate a $RT_0$ method with a special quadrature that guarantees maximum principles.

Positivity for the stationary advection--diffusion--reaction equation discretized by weak Galerkin methods, which are almost identical to HDG, is analyzed in \cite{LiuW20,WangXQR18,ZhouWJ22} for domains \( \Omega \subset \IR^2 \). In particular, \cite{LiuW20} considers rectangular meshes and observes positivity preservation if the elements' aspect ratios are in \( [\tfrac12, 2] \). In \cite{WangXQR18}, a weak Galerkin method with constant skeleton unknowns and linear bulk unknowns is shown to have a discrete maximum principle if an \( h \)-acute angle condition holds, which is a nonobtuse angle condition for the diffusion equation. Another weak Galerkin method, where the trace unknowns are face-wise linears and overall continuous while the bulk unknowns are broken linears is shown to satisfy a discrete maximum principle if the mesh satisfies a nonobtuse angle condition for simplicial meshes and an aspect ratio condition for rectangular meshes (if the diffusion equation is considered) in \cite{ZhouWJ22}. Considering the diffusion equation, we note that this scheme is very similar to the embedded discontinuous Galerkin method whose bulk unknown can be shown to be interpolation of the trace unknown (if the diffusion coefficient is an element-wise constant scalar). Thus, we expect this method to have similar properties (concerning discretization matrix, positivity, and convergence as continuous linear finite elements). The anisotropic diffusion equation is analyzed in \cite{HuangW15}, where the authors prove the anisotropic nonobtuse angle condition to be sufficient for positivity preservation. For isotropic tensors, this condition coincides with the non-obtuse angle condition.

As in \cite{LiuW20,WangXQR18,ZhouWJ22} we consider hybrid and discontinuous methods on simplicial and quadrilateral meshes. However, we do not restrict ourselves to two spatial dimensions, allow elements with obtuse angles, and also consider the lowest order case (where all polynomials are local constants). Moreover, our positivity preservation results hold in a point-wise manner (and not only at quadrature points) and we do not require specific quadrature rules, in contrast to many of the aforementioned papers. This is particularly important since the result of the diffusion problem may be used as data for other numerical simulations. We observe that our methods remain positivity preserving under similar conditions as the weak Galerkin method, but if those conditions are violated, we are also able to ensure positivity by over-penalizing the HDG method and trading in convergence (order) for positivity preservation. This feature is useful if HDG schemes are used as positivity-preserving low-order method in a flux-corrected/limited transport scheme, which we aim to discuss in future works. Moreover, we explore the sharpness of the angle and stabilization conditions by providing counterexamples. We expect that the above results can be transferreed to hybrid high-order methods, which are closely linked to HDG, see \cite{CockburnDE16}.
Note also that the analysis presented in this paper differs significantly (except for the arguments via inverse monotone matrices) from the one for positivity preserving (flux corrected) finite volume schemes which are discussed broadly in the literature, see e.g.\ \cite{LipnikovSSV07,NakshatralaV08,SchneiderGHTT18,ShengG22}.

The paper is organized as follows: Section \ref{sec:hdg_discr} introduces the diffusion equation and the HDG
discretization on hypergraphs. It also states the main theorem of this work.
Section \ref{SEC:hdg_is_fd} proves the theorem for graphs by establishing an
equivalence relation to finite differences. Section \ref{SEC:pos_pres_hg}
extends the proof to multi-dimensional cases, while Section \ref{SEC:counter} presents counterexamples for nonpositive discretizations. Section \ref{SEC:numerics} provides numerical results that confirm the theoretical findings. Finally, the paper concludes with some final remarks.

\section{Problem formulation and HDG discretization}\label{sec:hdg_discr}
%
In this paper, the notation $\Omega$ will denote either
\begin{enumerate}[leftmargin=*]
 \item\label{IT:graph} a graph of straight, one-dimensional ($d=1$) edges
$\edge \in \setEdge$ and zero-dimensional nodes $\node \in \setNode$, or
 \item\label{IT:hypergraph} a hypergraph in the sense of \cite{RuppGK22}, where all hyperedges
$\edge \in \setEdge$ are $d$-dimensional ($d>1$) polyhedra and the hypernodes $\node \in \setNode$ coincide with the $(d-1)$-dimensional faces of the
hyperedges, or
\item\label{IT:std_domain} a domain in $\IR^d$ comprising a mesh of polyhedral
elements $\edge \in \setEdge$ with associated $(d-1)$-dimensional faces
$\node \in \setNode$.
\end{enumerate}
The local dimension $d$ of $\Omega$ may or may not coincide with the dimension of the space $\IR^D$ in which $\Omega$ is embedded, i.e., $d \le D$. In all three cases, $\Omega$ is assumed to be connected and the boundary of any $\edge\in\setEdge$ is the union of some elements of $\setNode$. The $d$-dimensional manifolds $\edge\in\setEdge$ are assumed to be open, i.e.,
$\edge\cap\partial\edge=\emptyset$ for any $\edge\in\setEdge$. The hypernodes 
$\node\in\setNode$ are assumed to be closed. Those $\node\in\setNode$ which are
contained in the boundary of only one element of $\setEdge$ form the boundary
$\partial\Omega$ of $\Omega$. The boundary $\partial\Omega$ is decomposed into
a Dirichlet part $\Gamma_\textup D$ and a Neumann part $\Gamma_\textup N$ and
it is assumed that $\Gamma_\textup D$ is the union of a positive number of
elements of $\setNode$. The Neumann part $\Gamma_\textup N$ may be empty. It is
assumed that if $N\cap\partial E$ has a positive $(d-1)$-dimensional measure
for some $\node\in\setNode$ and $\edge\in\setEdge$, then $N\subset\partial E$.
Note that, in the cases (1) and (2), a (hyper)node $\node\in\setNode$ may be
contained in the boundaries of more than two (hyper)edges $\edge\in\setEdge$.
The parameter $h =\max_{\edge \in \setEdge} \operatorname{diam} (E)$ refers to
the maximum diameter of $\edge$. This notation is useful as one can refine the
(hyper)edges of a (hyper)graph similarly to elements of triangulations of
$d$-dimensional domains, cf.\ \cite{RuppGK22}.

We consider the stationary diffusion equation
\begin{subequations}\label{EQ:diffusion}
\begin{align}
 - \div( \kappa \nabla u ) & = f && \text{ in all } \edge\in\setEdge,\\
   \kappa \nabla u \cdot \Normal & = 0 && \text{ on } \Gamma_\textup N,\\
 u & = g_\textup D && \text{ on } \Gamma_\textup D, \\
 u|_{E_1}^{} & =u|_{E_2}^{}  && \text{ on all } 
 \node\in\setNode,\ N\subset\partial E_1\cap\partial E_2,\label{eq:cont} \\
 \sum_{\edge\in\setEdge,\,\partial E\supset N}\,
 (\kappa \nabla u|_E^{}\cdot\Normal_E)|_N^{} & =0  && \text{ for all }
 \node\in\setNode,\ N\not\subset\partial\Omega,\label{eq:conserv}
\end{align}
\end{subequations}
where $0 < \kappa_0 \le \kappa \in L^\infty(\Omega)$ with $\kappa_0 \in \IR$,
$0\le f\in L^2(\Omega)$, and $0 \le g_\textup D \in H^{1/2}(\Gamma_\textup D)$
are nonnegative functions. Here, 
$g_\textup D \in H^{1/2}(\Gamma_\textup D)$ is used as a shortcut for the
fact that, for any $N\subset\partial\Omega\cap\partial\edge$, the restriction 
$g_\textup D|_N^{}$ is the trace on $N$ of a function from $H^1(E)$. Finally,
$\Normal$ denotes an outward-pointing unit normal vector to
$\partial\Omega$ and $\Normal_E$ is the outward-pointing unit normal vector to
$\partial E$. The conditions \eqref{eq:cont} and \eqref{eq:conserv} imply that,
at each junction, the solution is continuous and conservative, i.e., all fluxes
sum to zero.

To formulate problem \eqref{EQ:diffusion} weakly, a natural space for $u$ 
is an analogue of the Sobolev space $H^1$ defined on the graph $\Omega$ by
\begin{equation*}
 \spaceH = \left\{ v \in \bigoplus_{\Edge \in \setEdge} H^1(\Edge) \colon 
 \gamma_1(v|_{E_1}^{}) = \gamma_2(v|_{E_2}^{}) \ \text{ on all }
 \node\in\setNode,\ N\subset\partial E_1\cap\partial E_2\right\},
\end{equation*}
where $\gamma_i$ is the trace operator on $H^1(E_i)$, $i=1,2$. We denote
\begin{equation*}
 \spaceH_{g_\textup D} =  \{ v \in\spaceH\colon\trace v|_{\Gamma_\textup D}^{}
 = g_\textup D \}
\end{equation*}
and use the notation $\spaceH_0$ for $\spaceH_{g_\textup D}$ with
$g_\textup D\equiv0$. A straightforward generalization of the standard
derivation of a primal weak formulation reveals that a weak solution to problem
\eqref{EQ:diffusion} is a function $u\in\spaceH_{g_\textup D}$ such that
\begin{equation}\label{eq:weak_form}
 \sum_{\Edge \in \setEdge} \int_\Edge \kappa \nabla u \cdot \nabla v \dbx =
\int_\Omega f v \dbx \qquad \forall v \in \spaceH_0.
\end{equation}
This weak formulation has a unique solution, see \cite{RuppGK22}.

\begin{remark}[Standard domain]
If $\Omega \subset \IR^d$ is a standard domain in the sense of item
(\ref{IT:std_domain}), then $\spaceH = H^1(\Omega)$ and 
$\spaceH_0=\{v\in H^1(\Omega)\colon \trace v|_{\Gamma_\textup D}^{}=0\}$.
Thus, we recover the well-known weak form of the diffusion equation.
\end{remark}

Apart from the unique existence of the weak solution 
$u\in\spaceH_{g_\textup D}$, we also have the following result.
\begin{lemma}\label{LEM:pos_preservation}
The solution
$u \in \spaceH_{g_\textup D}$ of \eqref{eq:weak_form} satisfies $u \ge 0$
almost everywhere.
\end{lemma}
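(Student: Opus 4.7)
The plan is to proceed by the classical Stampacchia truncation argument, using the negative part $u^- := \max(-u, 0)$ of the solution as a test function. On each edge $E \in \setEdge$ we have $u|_E \in H^1(E)$, and since the map $t \mapsto \max(-t,0)$ is Lipschitz and vanishes at $0$, Stampacchia's lemma yields $u^-|_E \in H^1(E)$ with $\nabla u^- = -\nabla u$ a.e.\ on $\{u<0\}$ and $\nabla u^- = 0$ a.e.\ on $\{u \ge 0\}$.

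The first step is then to verify $u^- \in \spaceH_0$. Since $u^-$ is the pointwise composition of $u$ with a continuous Lipschitz function, and traces commute with such scalar composition, the junction condition $\gamma_1(u|_{E_1}) = \gamma_2(u|_{E_2})$ on every $\node \in \setNode$ with $N \subset \partial E_1 \cap \partial E_2$ transfers to $\gamma_1(u^-|_{E_1}) = \gamma_2(u^-|_{E_2})$, so $u^- \in \spaceH$. Moreover, the Dirichlet trace $u|_{\Gamma_\textup D} = g_\textup D \ge 0$ yields $u^-|_{\Gamma_\textup D} = 0$, so $u^- \in \spaceH_0$. I expect this trace/composition step to be the main obstacle: in the hypergraph setting a single hypernode may be shared by more than two hyperedges, so the matching of all traces at a junction has to be argued carefully from the scalar Lipschitz composition on each incident edge.

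Next, I would test the weak formulation \eqref{eq:weak_form} with $v = u^- \in \spaceH_0$. The decomposition $u = u^+ - u^-$ together with the a.e.\ disjoint supports of $\nabla u^+$ and $\nabla u^-$ yields the pointwise identity $\nabla u \cdot \nabla u^- = -|\nabla u^-|^2$ a.e.\ on every $E \in \setEdge$, so
\begin{equation*}
 -\sum_{E \in \setEdge}\int_E \kappa\,|\nabla u^-|^2 \dbx
  \;=\; \sum_{E \in \setEdge}\int_E \kappa\,\nabla u \cdot \nabla u^- \dbx
  \;=\; \int_\Omega f\, u^- \dbx \;\ge\; 0,
\end{equation*}
where the last inequality uses $f \ge 0$ and $u^- \ge 0$. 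Combined with the uniform lower bound $\kappa \ge \kappa_0 > 0$, this forces $\nabla u^- = 0$ almost everywhere on each edge.

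To conclude, $u^-$ is (weakly) constant on each $E \in \setEdge$. The continuity of $u^-$ across every hypernode, established above, together with the connectedness of $\Omega$, then forces $u^-$ to be a single global constant. Because $\Gamma_\textup D$ is the union of at least one $\node \in \setNode$ and thus has positive $(d-1)$-measure where $u^-$ vanishes, this constant must be zero. Hence $u^- \equiv 0$ and $u \ge 0$ almost everywhere, as claimed.
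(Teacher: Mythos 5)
Your proof is correct and takes essentially the same route as the paper: the paper likewise tests \eqref{eq:weak_form} with the truncation $u^-$ (defined there with the opposite sign convention, $u^-=\min\{0,u\}$), invoking \cite[Lemma 7.6]{GT01} where you invoke Stampacchia's lemma, and runs the same sign argument on $\sum_E\int_E\kappa\,\nabla u\cdot\nabla u^-\dbx=\int_\Omega f u^-\dbx$. The only difference is that you spell out the concluding step (edgewise constancy, trace matching at hypernodes, connectedness, and vanishing on $\Gamma_\textup D$), which the paper leaves implicit.
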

\begin{proof}
Define $u^+(\vec x) = \max\{0, u(\vec x)\}$ and $u^-(\vec x) = \min\{0, u(\vec x)\}$.
Then, it follows from \cite[Lemma 7.6]{GT01} and the assumptions on
$g_\textup D$, that $u^- \in \spaceH_0$ and $\nabla u^+\cdot\nabla u^-=0$
a.e. in $\Edge$ for any $\Edge \in \setEdge$. Consequently,
\begin{equation*}
  0 \le \sum_{\Edge \in \setEdge} \int_\Edge \kappa \nabla u^- \cdot \nabla u^-
\dbx = \sum_{\Edge \in \setEdge} \int_\Edge \kappa \nabla u \cdot \nabla u^-
\dbx = \int_\Omega f u^- \dbx \le 0,
\end{equation*}
which yields the result as it implies that $u^- = 0$ and $u = u^+$.
\end{proof}

We introduce the skeleton
\begin{equation*}
   \Sigma_h=\bigcup_{\node\in\setNode}\,N
   =\bigcup_{\edge\in\setEdge}\,\partial E
\end{equation*}
and the spaces
\begin{equation*}
   \spaceM = \{ \mu\in L^2(\Sigma_h) \colon 
   \mu|_{\partial E}^{}\in H^{1/2}(\partial E)\,\,\forall\edge\in\setEdge\},
   \quad
   \spaceM_0 = \{\mu\in\spaceM\colon\mu=0\,\,\text{on } \Gamma_\textup D\}.
\end{equation*}
Then the weak formulation \eqref{eq:weak_form} can be equivalently written as
the following dual mixed hybrid formulation: Find $(u,\vec q,\lambda)\in
L^2(\Omega)\times\bigoplus\Hdiv(\edge)\times\spaceM$ with $\lambda=g_\textup D$
on $\Gamma_\textup D$ such that
\begin{subequations}\label{eq:mixed_hybrid}
 \begin{align}
  \int_\Edge \kappa^{-1} \vec q \cdot \vec p\dbx - \int_\Edge u \div \vec p\dbx
  & = - \langle\vec p \cdot \Normal_E,\lambda\rangle_{\partial \Edge}
  &&\forall \vec p\in\bigoplus_{\edge\in\setEdge}\Hdiv(\edge),\,\,
  \edge\in\setEdge,\label{eq:mixed_hybrid_primary}\\
  \int_\Edge v\div \vec q \dbx & = \int_\edge f v \dbx 
  &&\forall v\in L^2(\Omega),\,\,\edge\in\setEdge,\label{eq:mixed_hybrid_flux}\\
  \sum_{\edge\in\setEdge}\,
  \langle\vec q \cdot \Normal_E,\mu\rangle_{\partial \Edge} & =0
  &&\forall\mu\in\spaceM_0,
 \end{align}
\end{subequations}
where $\langle\cdot,\cdot\rangle_{\partial \Edge}$ denotes the duality pairing 
between $H^{-1/2}(\partial E)$ and $H^{1/2}(\partial E)$, which is used because
the normal trace of a function $\vec p\in\Hdiv(\edge)$ is an element of
$H^{-1/2}(\partial E)$. Using the results of \cite[Section III.1.1]{BF91}, it 
follows that $(u,\vec q,\lambda)$ solves \eqref{eq:mixed_hybrid} if and only if
$u\in\spaceH_{g_\textup D}$ is a solution of \eqref{eq:weak_form}, $\vec
q|_E^{}=-\kappa\nabla u|_E^{}$ for any $\edge\in\setEdge$, and
$\lambda=u|_{\Sigma_h}^{}$. This shows that \eqref{eq:mixed_hybrid} possesses a
unique solution.

The mixed hybrid formulation \eqref{eq:mixed_hybrid} is the basis for
discretizations of the diffusion equation \eqref{EQ:diffusion} considered in
this paper. To approximate the spaces used in \eqref{eq:mixed_hybrid}, we
introduce finite-dimensional spaces $U(\edge)\subset H^1(\edge)$, $\vec
Q(\edge)\subset H^1(\edge)^d$, and $M(\node)\subset L^2(\node)$ for each
$\edge\in\setEdge$ or $\node\in\setNode$ and set
\begin{align*}
 U_h & = \{ v_h \in L^2(\Omega)\colon v_h|_\edge^{}\in U(\edge)\,\,\forall \edge \in \setEdge \},\\
 \vec Q_h & = \{ \vec p_h \in L^2(\Omega)^d\colon\vec p_h|_\edge^{}\in
\vec Q(\edge)\,\,\forall \edge \in \setEdge \},\\
 M_h & = \{ \mu_h \in L^2(\Sigma_h)\colon\mu_h|_\node^{}\in M(\node)\,\,\forall
\node \in \setNode\}.
\end{align*}
Moreover, we define the space
\begin{equation*}
   M_h^0 = \{\mu_h\in M_h\colon\mu_h=0\,\,\text{on } \Gamma_\textup D\}
\end{equation*}
and approximate $g_\textup D$ by a function $g_{{\textup D},h}\ge0$ defined on
$\Gamma_\textup D$ such that $g_{{\textup D},h}|_\node^{}\in M(\node)$ for any
$\node\subset\Gamma_\textup D$. Then the HDG discretization of
\eqref{EQ:diffusion} reads: Find
$(u_h,\vec q_h,\lambda_h)\in U_h\times\vec Q_h\times M_h$ with
$\lambda_h=g_{{\textup D},h}$ on $\Gamma_\textup D$ such that
\begin{subequations}\label{EQ:hdg_scheme}
 \begin{align}
  &\int_\Edge\kappa^{-1}\vec q_h\cdot\vec p_h\dbx-\int_\Edge u_h\div\vec p_h\dbx
   = - \int_{\partial\Edge}\lambda_h\,\vec p_h\cdot\Normal_E\ds\quad
  \forall \vec p_h\in\vec Q_h,\,\,\edge\in\setEdge,\label{EQ:hdg_primary}\\
  &\int_\Edge v_h\div \vec q_h \dbx + \tau_E \int_{\partial \Edge} u_h v_h \ds
   = \tau_E \int_{\partial \Edge} \lambda_h v_h \ds + \int_\edge f v_h \dbx 
  \quad\forall v_h\in U_h,\,\,\edge\in\setEdge,\label{EQ:hdg_flux}\\
  &\sum_{\edge\in\setEdge}\,\int_{\partial\Edge}\mu_h\,(\vec q_h\cdot\Normal_E 
  + \tau_E(u_h-\lambda_h))\ds =0 \quad\forall\mu_h\in M_h^0.
  \label{EQ:hdg_global}
 \end{align}
\end{subequations}
The discretization comprises an HDG stabilization with penalty coefficients
$\tau_E\ge0$. If not stated otherwise, it will be always assumed that
$\tau_E>0$. It is important to include this stabilization not only in the
flux balance \eqref{EQ:hdg_flux} but also in the global mass-conservation
constraint \eqref{EQ:hdg_global} which couples the local equations
\eqref{EQ:hdg_primary} and \eqref{EQ:hdg_flux}. Note that \eqref{EQ:hdg_global}
can be equivalently written in the form
\begin{equation}
  \sum_{\edge\in\setEdge,\,\partial E\supset N}\,
  \int_\node\mu\,(\vec q_h|_E^{}\cdot\Normal_E 
  + \tau_E(u_h|_E^{}-\lambda_h))\ds =0\quad\forall\mu\in M(\node),
  \label{EQ:hdg_global_equiv}
\end{equation}
for all $\node\in\setNode$ such that $\node\not\subset\Gamma_\textup D$.

\begin{theorem}[Unique solvability of the HDG discretization]\label{thm:hdg_solvability}
Let $\tau_E>0$ and $\nabla U(\edge)\subset\vec Q(\edge)$ for all
$\edge\in\setEdge$. Then the HDG discretization \eqref{EQ:hdg_scheme} possesses
a unique solution.
\end{theorem}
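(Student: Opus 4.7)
The plan is to reduce the theorem to uniqueness by exploiting the fact that \eqref{EQ:hdg_scheme} is a square finite-dimensional linear system: the number of scalar equations equals the number of scalar unknowns (each test space coincides with the corresponding trial space, modulo the Dirichlet data), so existence for arbitrary data follows once one shows that the homogeneous system admits only the trivial solution. Hence I would set $f\equiv0$ and $g_{\textup D,h}\equiv0$, take any solution triple $(u_h,\vec q_h,\lambda_h)$ with $\lambda_h=0$ on $\Gamma_\textup D$ (so that $\lambda_h\in M_h^0$), and show that all three components vanish.

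The key energy identity comes from testing \eqref{EQ:hdg_primary} with $\vec p_h=\vec q_h$, \eqref{EQ:hdg_flux} with $v_h=u_h$, and \eqref{EQ:hdg_global} with $\mu_h=\lambda_h$, then summing over $\edge\in\setEdge$. The two occurrences of $\int_E u_h\div\vec q_h\dbx$ cancel between the first and second identities, and the skeleton contributions reorganize (using that $\lambda_h$ is single-valued on $\Sigma_h$ and vanishes on $\Gamma_\textup D$) into a perfect square. What should drop out is
\begin{equation*}
 \sum_{\edge\in\setEdge}\int_\Edge\kappa^{-1}|\vec q_h|^2\dbx
 +\sum_{\edge\in\setEdge}\tau_E\int_{\partial\Edge}(u_h-\lambda_h)^2\ds=0.
\end{equation*}
Since $\kappa\ge\kappa_0>0$ and $\tau_E>0$, this immediately gives $\vec q_h\equiv 0$ on every $\edge$ and $u_h|_{\partial\Edge}=\lambda_h|_{\partial\Edge}$ for every $\edge$.

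With $\vec q_h=0$ in hand, equation \eqref{EQ:hdg_primary} reduces to $\int_\Edge u_h\div\vec p_h\dbx=\int_{\partial\Edge}\lambda_h\,\vec p_h\cdot\Normal_E\ds$ for every $\vec p_h\in\vec Q(\edge)$. Integrating by parts on the left and using $u_h=\lambda_h$ on $\partial\Edge$ from the previous step, this collapses to $\int_\Edge\nabla u_h\cdot\vec p_h\dbx=0$ for all $\vec p_h\in\vec Q(\edge)$. Here is the only place where the hypothesis $\nabla U(\edge)\subset\vec Q(\edge)$ enters: it lets me test with $\vec p_h=\nabla u_h$ to conclude that $\nabla u_h=0$, so $u_h$ is piecewise constant on $\setEdge$.

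The final step is a simple propagation argument. Because $\lambda_h$ is single-valued on $\Sigma_h$ and equals the (constant) value of $u_h$ on both sides of each interior hypernode, the constants on any two (hyper)edges sharing a hypernode must agree; connectedness of $\Omega$ then forces $u_h$ to be globally constant, and the Dirichlet condition $\lambda_h=0$ on $\Gamma_\textup D$ (which is required to contain at least one element of $\setNode$) pins this constant to zero. Thus $u_h\equiv0$, $\lambda_h\equiv0$, and the system is uniquely solvable. The main obstacle I anticipate is not any deep step but the careful bookkeeping of the skeleton integrals when summing the three tested equations---especially making sure contributions on $\Gamma_\textup D$ and $\Gamma_\textup N$ are handled correctly so that the cross terms collapse cleanly into the squared jump $(u_h-\lambda_h)^2$.
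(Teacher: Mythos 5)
Your proposal is correct and follows essentially the same route as the paper: reduce to the homogeneous square system, derive the energy identity $\sum_E\int_E\kappa^{-1}|\vec q_h|^2\dbx+\sum_E\tau_E\int_{\partial E}(u_h-\lambda_h)^2\ds=0$ by testing with $(\vec q_h,u_h,\pm\lambda_h)$, then use $\nabla U(\edge)\subset\vec Q(\edge)$ in the integrated-by-parts form of \eqref{EQ:hdg_primary} to get $\nabla u_h=0$, and conclude via connectedness and the Dirichlet condition. The only cosmetic difference is the sign of the test function $\mu_h$ (the paper uses $-\lambda_h$), which is immaterial since \eqref{EQ:hdg_global} is homogeneous.
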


\begin{proof}
The discretization \eqref{EQ:hdg_scheme} can be written as a linear system with
a square matrix for the degrees of freedom of the solution 
$(u_h,\vec q_h,\lambda_h)$. Therefore, it suffices to show that the homogeneous
problem with $f=0$ and $g_{{\textup D},h}=0$ has only a trivial solution.
First, we set $\vec p_h=\vec q_h$, $v_h=u_h$, and $\mu_h=-\lambda_h$, sum both 
\eqref{EQ:hdg_primary} and \eqref{EQ:hdg_flux} over all $\edge\in\setEdge$ and
add all three relations together. This yields
\begin{equation*}
   \sum_{\edge\in\setEdge}\,\int_\Edge \kappa^{-1}|\vec q_h|^2\dbx
   +\sum_{\edge\in\setEdge}\,\tau_E\int_{\partial\Edge}(u_h-\lambda_h)^2\ds =0.
\end{equation*}
Consequently $\vec q_h=\vec 0$, $u_h\in\spaceH_0$, and
$\lambda_h=u_h|_{\Sigma_h}^{}$. Then, integrating by parts in
\eqref{EQ:hdg_primary}, gives
\begin{equation*}
  \int_\Edge \vec p_h\cdot\nabla u_h\dbx=0\quad
  \forall \vec p_h\in\vec Q_h,\,\,\edge\in\setEdge.
\end{equation*}
Thus, for any $\edge\in\setEdge$, setting $\vec p_h|_E^{}=\nabla u_h|_E^{}$
leads to $\nabla u_h|_E^{}=0$ and hence $u_h|_E^{}$ is constant. Since
$u_h\in\spaceH_0$, it follows that $u_h=0$, which finishes the proof.
\end{proof}

The assumptions of Theorem~\ref{thm:hdg_solvability} are satisfied for most
cases considered in this paper. There are also many choices of the spaces 
$U(\edge)$, $\vec Q(\edge)$, and $M(\node)$ for which the HDG method
\eqref{EQ:hdg_scheme} is well-posed with $\tau_E=0$, see \cite{CockburnGL09}.
From these possibilities only the choice $U(\edge)=\polynomialsp_0(\edge)$,
$\vec Q(\edge)=\polynomialsRT_0(\edge)$, and $M(\node)=\polynomialsp_0(\node)$
will be briefly considered where $\polynomialsRT_0(\edge)$ is the lowest-order
Raviart-Thomas space defined by
\begin{alignat*}{2}
   \polynomialsRT_0(\edge) &= \{\vec q+a\vec x\colon
   \vec q\in\polynomialsp_0(\edge)^d,\,a\in\polynomialsp_0(\edge),\,
   \vec x\in\edge\}\qquad&&\mbox{for simplices}\,,\\
   \polynomialsRT_0(\edge) &= \{\vec q+(a_1 x_1,\dots,a_d x_d)^T\colon
   \vec q,\vec a\in\polynomialsp_0(\edge)^d,\,
   \vec x\in\edge\}\qquad&&\mbox{for $n$-rectangles}\,.
\end{alignat*}

It follows from Lemma~\ref{LEM:pos_preservation} that the solution of the
mixed hybrid formulation \eqref{eq:mixed_hybrid} satisfies $u\ge0$ and
$\lambda\ge0$ almost everywhere (with respect to the $d$-dimensional and
$(d-1)$-dimensional measures, respectively). Our target is to investigate the
validity of these nonnegativity conditions for the above HDG discretization
with various spaces and we will establish both positive and negative results.
We conclude this section by stating the main theorem of this contribution. The
proof will be given in the next sections.

\begin{theorem}[Nonnegativity of solutions]\label{TH:main_result}\
 \begin{enumerate}[leftmargin=*]
\item\label{IT:graph_first_order} 
Let $\Omega$ be a graph in the sense of item (\ref{IT:graph}) from the
beginning of Section~\ref{sec:hdg_discr}, $U(\edge)=\polynomialsp_1(\edge)$,
$Q(\edge)=\polynomialsp_1(\edge)$, and $\tau_E>0$ for all $\edge\in\setEdge$, 
$M(\node)=\IR$
for all $\node\in\setNode$, and assume that $\kappa$ is constant on each
$\edge\in\setEdge$ (but not necessarily globally constant). Let $u_h$ and
$\lambda_h$ be the solutions of the HDG method \eqref{EQ:hdg_scheme} and denote
by $u_{0,\edge}$ the mean value of $u_h$ on any $\edge\in\setEdge$. Then 
$\lambda_h \ge 0$ and $u_{0,\edge} \ge 0$ for all $\edge\in\setEdge$. If, 
additionally, $f$ is constant on each $\edge \in \setEdge$, then $u_h \ge 0$.
\item\label{IT:graph_zero_order} 
Let $\Omega$, $\tau_E$, $\kappa$, and $M(\node)$ be like in item
(\ref{IT:graph_first_order}). For all $\edge\in\setEdge$, let either
 \begin{itemize}
  \item $U(\edge)=\polynomialsp_1(\edge)$ and $Q(\edge)=\polynomialsp_0(\edge)$, or
  \item $U(\edge)=\polynomialsp_0(\edge)$ and $Q(\edge)=\polynomialsp_1(\edge)$, or
  \item $U(\edge)=\polynomialsp_0(\edge)$ and $Q(\edge)=\polynomialsp_0(\edge)$.
 \end{itemize}
Then
the solutions $u_h$ and $\lambda_h$ of the HDG method \eqref{EQ:hdg_scheme}
satisfy $u_h\ge0$ and $\lambda_h\ge0$.
\item\label{IT:zero_order}Let $\Omega$ be a hypergraph or a domain in
$\IR^d$ in the sense of items (\ref{IT:hypergraph}) and (\ref{IT:std_domain})
from the beginning of Section~\ref{sec:hdg_discr}, and let
$U(\edge)=\polynomialsp_0(\edge)$, $\vec Q(\edge)=\polynomialsp_0(\edge)^d$, $\tau_\edge > 0$ for all $\edge\in\setEdge$, and $M(\node)=P_0(\node)$, for all $\node\in\setNode$ (LDG-H).
Let one of the following conditions hold for each $\edge \in \setEdge$:
  \begin{enumerate}
   \item\label{IT:cond_angle} We have
$\Normal_E|_\node^{}\cdot\Normal_E|_{\node'}^{}\le0$ for all 
$\node,\node'\in\setNode$ with $\node\neq\node'$ and
$\node,\node'\subset\partial\edge$.
\item\label{IT:tau_cond} The stabilization parameters $\tau_E$ are chosen such that
 \begin{equation}\label{EQ:tau_cond}
   \tau_\edge\ge\frac{|\partial\edge|}{\int_\edge \kappa^{-1} \dbx}.
  \end{equation}
 \end{enumerate}
 Then the solutions $u_h$ and $\lambda_h$ of the HDG method
\eqref{EQ:hdg_scheme} satisfy $\lambda_h \ge 0$ and $u_h \ge 0$.
\item\label{IT:RT}Let $\Omega$, $U(\edge)$, and $M(\node)$ be like in
item~(\ref{IT:zero_order}). Let all $\edge\in\setEdge$ be simplices and let
$\vec Q(\edge)=\polynomialsRT_0(\edge)$, $\tau_\edge=0$, and 
$\kappa|_\edge^{}$ be constant for all $\edge\in\setEdge$ (RT-H). If the
condition (\ref{IT:cond_angle}) from the previous item holds, then the
solutions $u_h$ and $\lambda_h$ of the HDG method \eqref{EQ:hdg_scheme} satisfy
$\lambda_h \ge 0$ and $u_h \ge 0$.
\item\label{IT:q0_p0_p0}Let $\Omega$ be a hypergraph or a domain in
$\IR^2$ in the sense of items (\ref{IT:hypergraph}) and (\ref{IT:std_domain})
from the beginning of Section~\ref{sec:hdg_discr}, and let all hyperedges be
rectangles. Let $\kappa_\edge:=\kappa|_\edge^{}$ be constant for each
$\edge\in\setEdge$. Let $U(\edge)=\polynomialsp_0(\edge)$ for all
$\edge\in\setEdge$, $M(\node)=\polynomialsp_0(\node)$ for all
$\node\in\setNode$, and let, for all $\edge\in\setEdge$, either 
$\vec Q(\edge)=\polynomials_1(\edge)^2$, or 
$\vec Q(\edge)=\polynomialsp_1(\edge)^2$ or 
$\vec Q(\edge)=\polynomialsRT_0(\edge)$ (rectangular version). Then all these
three HDG methods are equivalent and there exists a positive constant
$C(\varrho)$ depending only on the aspect ration $\varrho$ of the rectangles
$\edge$ such that, if
\begin{equation}\label{eq:q0_p0_p0_cond}
   \tau_E\ge C(\varrho)\frac{\kappa_\edge}{\operatorname{diam}(\edge)}\qquad
   \forall\edge\in\setEdge,
\end{equation}
then the solutions $u_h$ and $\lambda_h$ of the HDG method
\eqref{EQ:hdg_scheme} satisfy $\lambda_h \ge 0$ and $u_h \ge 0$. If $\edge$ is
a square of side length $h$, then the condition \eqref{eq:q0_p0_p0_cond} can be
replaced by
\begin{equation}\label{eq:q0_p0_p0_cond_square}
   \tau_E\ge\frac{2\kappa_\edge}h.
\end{equation}
\end{enumerate}
\end{theorem}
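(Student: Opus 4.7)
The plan is to exploit the hybrid structure of the scheme: on each (hyper)edge $E$, the equations \eqref{EQ:hdg_primary}--\eqref{EQ:hdg_flux} form a square system that, under the hypotheses of Theorem~\ref{thm:hdg_solvability}, expresses $u_h|_E$ and $\vec q_h|_E$ as affine functions of the skeleton data $\lambda_h|_{\partial E}$ and of $f|_E$. Substituting these into the facet-wise conservation constraint \eqref{EQ:hdg_global_equiv} then yields a reduced linear system $S\lambda_h=b$ for the free skeleton unknowns alone, with $b\ge 0$ inherited from the nonnegative data. The core of the proof in every case would be to verify that $S$ is, after an appropriate global sign change, an M-matrix: positive diagonal, nonpositive off-diagonal, weakly diagonally dominant with strict dominance in rows adjacent to $\Gamma_\textup D$. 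Entrywise nonnegativity of $S^{-1}$ then forces $\lambda_h\ge 0$, and nonnegativity of $u_h$ (and of its mean $u_{0,E}$) would follow from the local reconstruction formula.

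For items (\ref{IT:graph_first_order}) and (\ref{IT:graph_zero_order}), where $\Omega$ is a graph, every $E$ is an interval and I would carry out all local computations in closed form. In each of the four polynomial combinations, the two-endpoint coupling on $E$ collapses, after multiplication by an appropriate scalar, to the classical three-point finite-difference stencil for $-\kappa u''$, and assembly along the graph produces a (block) tridiagonal M-matrix. For the $(\polynomialsp_1,\polynomialsp_1)$ case of item (\ref{IT:graph_first_order}), the local reconstruction presents $u_h|_E$ as the affine function whose endpoint values are obtained from $\lambda_h|_{\partial E}$ plus a source correction of the form $\int_E f\varphi\dbx$ for a local basis function $\varphi$. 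The mean $u_{0,E}$ is a convex combination of nonnegative quantities, but at an individual endpoint the source correction can have a sign issue unless the two endpoints are treated symmetrically; this is exactly what demanding $f|_E$ constant guarantees, matching the stated hypothesis.

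For items (\ref{IT:zero_order})--(\ref{IT:q0_p0_p0}) the bulk space is $\polynomialsp_0$, so the reconstruction writes $u_h|_E$ as a weighted average of $\lambda_h|_{\partial E}$ plus a nonnegative multiple of $f_E=\frac{1}{|E|}\int_E f\dbx$, whence $\lambda_h\ge 0$ already implies $u_h\ge 0$. The coupling between facets of a single element is encoded in a small matrix $A_E$ I would compute directly. In item (\ref{IT:zero_order}) (LDG-H with $\vec Q=\polynomialsp_0^d$), after the global sign adjustment, the off-diagonal between facets $N,N'\subset\partial E$ comes out proportional (with positive factor $|N||N'|/\int_E\kappa^{-1}\dbx$) to $\Normal_E|_N^{}\cdot\Normal_E|_{N'}^{}-\tau_E\int_E\kappa^{-1}\dbx/|\partial E|$, so either (\ref{IT:cond_angle}) or \eqref{EQ:tau_cond} renders it nonpositive, while weak diagonal dominance follows from the divergence-theorem identity $\sum_{N'\subset\partial E}|N'|\Normal_E|_{N'}^{}=\vec 0$. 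For item (\ref{IT:RT}) ($RT_0$ on a simplex with $\tau_E=0$), $A_E$ is essentially the inverse of the $\kappa^{-1}$-weighted $RT_0$ mass matrix; on a simplex this inverse can be identified, up to geometric factors, with the continuous $\polynomialsp_1$ stiffness matrix, whose off-diagonals obey the classical cotangent sign pattern and are therefore controlled precisely by condition (\ref{IT:cond_angle}). For item (\ref{IT:q0_p0_p0}), I would first prove the equivalence of the three flux-space choices on a rectangle by showing that only the $L^2$-projection of $\vec q_h$ onto $\polynomialsp_0(E)^2$ and a single higher moment actually enter the reduced system; the off-diagonal sign analysis then reduces to the LDG-H form, with the explicit constant $C(\varrho)$ emerging from an anisotropic inverse-trace estimate on the rectangle and collapsing to $2$ on a square of side $h$.

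The main obstacle I anticipate is the off-diagonal sign analysis of $A_E$ in items (\ref{IT:zero_order})--(\ref{IT:q0_p0_p0}): it requires element-by-element computations on elements of arbitrary shape and, in the RT-H case, the algebraic identification of the inverse Raviart--Thomas mass matrix with a stiffness matrix whose off-diagonal signs are controlled by geometric angle quantities. A secondary subtlety, already visible in item (\ref{IT:graph_first_order}), is that nonnegativity of $\lambda_h$ does not automatically give pointwise nonnegativity of $u_h$ once the bulk space is richer than $\polynomialsp_0$: the discrete reconstruction of $u_h$ from boundary traces and the source can create interior sign changes, which is precisely why that item requires the extra structural assumption that $f$ be constant on each edge.
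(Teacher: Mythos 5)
Your overall strategy---condense to a skeleton system $S\lambda_h=b$ with $b\ge0$, verify that $S$ has nonpositive off-diagonal entries and vanishing row sums so that the Dirichlet-completed matrix is an M-matrix, then recover $u_h\ge0$ (or $u_{0,\edge}\ge0$) from the local reconstruction---is exactly the paper's. Your treatment of items (\ref{IT:graph_first_order})--(\ref{IT:zero_order}) matches Sections~\ref{SEC:hdg_is_fd} and~\ref{SEC:p0} essentially verbatim, including the off-diagonal formula $|\node||\node'|\bigl(\bar\kappa_E\,\Normal_E|_\node^{}\cdot\Normal_E|_{\node'}^{}-\tau_E/|\partial\edge|\bigr)$ and the reason why $f$ must be edgewise constant to get pointwise nonnegativity of $u_h$ in the $(\polynomialsp_1,\polynomialsp_1)$ graph case.

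Two details would not survive as written. First, in item (\ref{IT:q0_p0_p0}) the sign analysis does \emph{not} reduce to the LDG-H form: with $\vec Q(\edge)=\polynomials_1(\edge)^2$ the linear part $a_1^1x_1$ of $\vec q_h$ contributes an additional negative term to the coupling between the opposite faces $\node_1^\pm$, namely the last term in \eqref{eq:coeff_at_l1m}. In the genuine LDG-H form this opposite-face coupling is $\kappa_\edge/|\edge|+\tau_\edge/|\partial\edge|>0$ for every $\tau_\edge>0$ (rectangles satisfy the angle condition), so a reduction to LDG-H would wrongly predict positivity for all $\tau_\edge>0$, contradicting the counterexample of Section~\ref{sec:counter_RT}. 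The constant $C(\varrho)$ arises as the smallest positive root of the quadratic inequality \eqref{eq:quadr_ineq}, not from a trace estimate, and the bound $2\kappa_\edge/h$ on squares comes from that inequality degenerating to $2(s+1)\ge3$. Second, in item (\ref{IT:RT}) the reduced matrix is indexed by faces, so the classical identification you want is with the nonconforming (Crouzeix--Raviart) stiffness matrix rather than the conforming $\polynomialsp_1$ one; since the gradient of the CR basis function attached to $\node$ is proportional to $|\node|\,\Normal_E|_\node^{}$, the off-diagonal sign is still governed by $\Normal_E|_\node^{}\cdot\Normal_E|_{\node'}^{}$ and condition (\ref{IT:cond_angle}) suffices. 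The paper avoids this detour by showing directly that the constant part of $\vec q_h$ coincides with the $\polynomialsp_0^d$ case and that the $\polynomialsRT_0$ enrichment only modifies the right-hand side, which stays nonnegative by \eqref{eq:identity}.
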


Loosely speaking, the content of Theorem~\ref{TH:main_result} contains several main components if \( \vec q \) is approximated by element-wise constant, linears, or Raviart-Thomas functions:
\begin{itemize}
 \item If the HDG method is used to discretize a diffusion equation with edge-wise constant diffusion coefficient on a graph, and \( u \) is approximated by piecewise constants, then \( \lambda_h \ge 0 \) and \( u_h \ge 0 \). Moreover, if \( u \) is approximated by piecewise linear functions and \( f \) is constant on each edge, then \( u_h \ge 0 \) as well.
 \item If the HDG method is used to discretize a diffusion equation with edge-wise constant diffusion coefficient on a hypergraph or a domain in \( \IR^d \), all angles are \( \le \tfrac\pi2 \), and \( u_h \) is approximated by element-wise constants, then \( u_h \ge 0 \) and \( \lambda_h \ge 0 \).
 \item If the angle condition in the aforementioned item is violated, we can construct a positivity preserving HDG method by using piecewise constant functions for \( u_h \) and a suitable stabilization parameter \( \tau_E \) that satisfies the condition \eqref{EQ:tau_cond}.
\end{itemize}
If the above conditions are not satisfies, then positivity cannot be guaranteed in general as demonstrated in Section~\ref{SEC:counter}.

\begin{remark}
In practical applications, one can fulfill the assumptions on piecewise
constant data by approximating $\kappa$ or $f$ by piecewise constant functions
defined by local $L^2$ projections to constants or by replacing the functions
on any $\edge$ by their values at the barycenter of $\edge$. The latter
possibility corresponds to using the midpoint rule for quadrature.

Condition (\ref{IT:cond_angle}) is satisfied for tensorial meshes and for
simplicial meshes without obtuse angles. Under this condition, the RT-H method 
on simplicial meshes is positivity preserving also for $\tau_\edge \equiv 0$.
On simplicial meshes with obtuse angles, the RT-H method has to be used with
appropriate $\tau_\edge>0$ to guarantee that the solution is nonnegative.

Concerning item (\ref{IT:zero_order}), typical choices for $\tau_E$ in HDG
methods are $\tau_E = \mathcal O(1)$ and $\tau_E = \mathcal O(h^{-1})$,
respectively. The latter choice is motivated by the fact that the stabilization
term in \eqref{EQ:hdg_flux} is a lumped approximation to the bulk component of
the HDG stabilization. If condition (\ref{IT:cond_angle}) is satisfied, then 
the former choice should be considered, whereas the latter one corresponds to
condition (\ref{IT:tau_cond}).  At least on regular, and geometrically
conforming meshes, and for equal-order spaces of polynomial degree $k$, we can
immediately deduce from \cite[Th.\ 2.1, Lem.\ 3.2, Th.\ 4.1]{CockburnGS10}
that
\begin{align*}
 \| \vec q_h - \vec q \|_{0,E}^{} & \lesssim h_E^{k + 1} | \vec q |_{k+1,E}^{} + \tau_E
h_E^{k + 1} | u |_{k + 1,E}^{}, \\
 \| u_h - u \|_{0,E}^{} & \lesssim h_E^{k + 1} | u |_{k + 1,E}^{} + \frac{h_E^{k +
1}}{\tau_E} | \nabla \cdot \vec q |_{k,E}^{} + h^{\min(k,1)} \| \vec q_h - \vec
q \|_{0,E}^{},
\end{align*}
yielding first order convergence for $u_h$ and $\vec q_h$ if the polynomial approximation spaces are chosen to be of degree $k=0$ and $\tau = \mathcal O(1)$. Additionally, \cite{CockbDG08} state the following table for rates of convergence:
\begin{table}[h!]
 \begin{tabular}{cccrl}
  \toprule
   method & \( \| \vec q - \vec q_h \|_{L^2(\Omega)} \) & \( \| u_h - u \|_{L^2(\Omega)} \) & \multicolumn{2}{c}{condition} \\
   \midrule
   RT-H$_k$ & \( k+1 \) & \( k+1 \) & \(k \ge 0\), &\( \tau_\edge = 0 \) \\
   LDG-H$_k$ & \( k \) & \( k+1 \) &  \( k \ge 1\), & \( \tau_\edge = \mathcal O(\tfrac1h) \) \\
   LDG-H$_k$ & \( k + \tfrac12 \) & \( k+1 \) &  \( k \ge 0 \), &\(\tau_\edge = \mathcal O(1) \) \\
   BDM-H$_k$ & \( k+1 \) & \( k \) &  \( k \ge 1\), &\(\tau_\edge = 0 \) \\
   \bottomrule
 \end{tabular}
\end{table}

Notably, conditions \eqref{EQ:tau_cond}, \eqref{eq:q0_p0_p0_cond}, and
\eqref{eq:q0_p0_p0_cond_square} can influence the convergence rates.
\end{remark}

\begin{remark}\label{rem:coefficients}
A large part of the analysis in this paper will be performed for
two-dimensional rectangular hyperedges $\edge\in\setEdge$ and spaces
$U(\edge)\subset\polynomials_1(\edge)$, 
$\vec Q(\edge)\subset\polynomials_1(\edge)^2$, and 
$M(\node)\subset\polynomialsp_1(\node)$. In this case, assuming that $\kappa$
takes a constant value $\kappa_\edge>0$ on each $\edge\in\setEdge$, it is not
difficult to derive explicit formulas for $u_h$ and $\vec q_h$ solving
\eqref{EQ:hdg_primary} and \eqref{EQ:hdg_flux} for a given function
$\lambda_h\in M_h$. To this end, we consider a local coordinate system with axes
parallel to the edges of the rectangle $\edge$ and origin at the barycenter of
$\edge$. Thus, with respect to this coordinate system, we have
$\edge=(-h_1/2,h_1/2)\times(-h_2/2,h_2/2)$ for some $h_1,h_2>0$. To simplify
the derivation, the coordinate axes will be denoted $x_1$ and $x_2$ although at
other places in this paper the notation $x$ and $y$ will be preferred. The
notations $\vec e_1$ and $\vec e_2$ will be used for unit vectors in the
directions of the coordinate axes $x_1$ and $x_2$, respectively.  The edges of
$\edge$ are denoted by $\node_1^\pm$ and $\node_2^\pm$ with the convention that
$\Normal_E|_{\node_i^\pm}^{}=\pm\vec e_i$, $i=1,2$, see
Fig.~\ref{fig:local_notation}.
\begin{figure}[t]
\centering
\begin{tikzpicture}
   \draw[line width=0.3mm]
      (0,0) -- (4,0) -- (4,3) -- (0,3) -- cycle;
   \draw [-{Latex[length=3mm]}, line width=0.2mm] (-1.5,1.5) -- (5.5,1.5);
   \draw [-{Latex[length=3mm]}, line width=0.2mm] (2,-1.5) -- (2,4.5);
   \node {} (0.05,2.9) node[below right] {$\edge$};
   \node {} (5.2,1.4) node[below] {$x_1$};
   \node {} (1.95,4.2) node[left] {$x_2$};
   \node {} (4,2.4) node[right] {$\node_1^+$};
   \node {} (0,2.4) node[left] {$\node_1^-$};
   \node {} (3.4,3) node[above] {$\node_2^+$};
   \node {} (3.4,0) node[below] {$\node_2^-$};
   \node {} (4,1.5) node[below right] {$\frac{h_1}2$};
   \node {} (0,1.5) node[below left] {$-\frac{h_1}2$};
   \node {} (2,3) node[above left] {$\frac{h_2}2$};
   \node {} (2,0) node[below left] {$-\frac{h_2}2$};
\end{tikzpicture}
\caption{Local coordinate system for a rectangular hyperedge $\edge$ and
notation for hypernodes.}
\label{fig:local_notation}
\end{figure}
On $\edge$, the functions $u_h$ and $\vec q_h$ have the form
\begin{subequations}\label{eq:uh_qh_general}
\begin{align}
 u_h|_\edge^{}(x_1,x_2) & = u_0 + a_1 x_1 + a_2 x_2 + b x_1 x_2, \\
 \vec q_h|_\edge^{}(x_1,x_2) & = \vec q_0 + 
 \begin{pmatrix}
   a^1_1 x_1 + a^1_2 x_2 + b^1 x_1 x_2\\[1mm]
   a^2_1 x_1 + a^2_2 x_2 + b^2 x_1 x_2
 \end{pmatrix}.
\end{align}
\end{subequations}
Note that the considered basis functions $1$, $x_1$, $x_2$, $x_1 x_2$ of
$Q_1(\edge)$ are orthogonal in both $L^2(\edge)$ and $L^2(\partial\edge)$, i.e.,
\begin{align*}
 &\int_\edge x_1\dbx=\int_\edge x_2\dbx=\int_\edge x_1x_2\dbx
 =\int_\edge x_1^2x_2\dbx=\int_\edge x_1x_2^2\dbx=0,\\
 &\int_{\partial\edge}x_1\ds=\int_{\partial\edge}x_2\ds
 =\int_{\partial\edge}x_1x_2\ds=\int_{\partial\edge}x_1^2x_2\ds
 =\int_{\partial\edge}x_1x_2^2\ds=0.
\end{align*}
We observe that testing \eqref{EQ:hdg_primary} with
\begin{itemize}
 \item $\vec p_h = \vec e_k$, $k=1,2$, yields
 \begin{equation}\label{eq:q0}
  \vec q_0 = - \frac{\kappa_\edge}{|\edge|} 
  \int_{\partial \edge} \lambda_h \Normal_\edge \ds,
 \end{equation}
 \item $\vec p_h = x_i \vec e_k$, $i,k\in\{1,2\}$, yields
 \begin{equation*}\label{eq:aik}
  a^k_i  = \delta_{ik} \frac{12\kappa_\edge}{h_i^2} u_0 
  - \frac{12\kappa_\edge}{|\edge|\,h_i^2}\left(
  \int_{\node_k^+}\lambda_h x_i\ds-\int_{\node_k^-}\lambda_h x_i\ds\right),
 \end{equation*}
 \item $\vec p_h = x_1 x_2 \vec e_k$, $k\in\{1,2\}$, yields
 \begin{equation*}\label{eq:bk}
  b^k = a_i\frac{12\kappa_\edge}{h_k^2} 
  -\frac{72\kappa_\edge}{h_i^3h_k^2}
  \int_{\node_k^+\cup\node_k^-}\lambda_h x_i\ds,
 \end{equation*}
 where $i\neq k$, $i\in\{1,2\}$.
\end{itemize} 
On the other hand, testing \eqref{EQ:hdg_flux} with
\begin{itemize}
 \item $v_h=1$, yields
\begin{equation}\label{eq:u0}
 (a^1_1+a^2_2)|\edge| + \tau_\edge|\partial\edge|u_0  
 = {\tau_\edge}\int_{\partial\edge} \lambda_h\ds + \int_\edge f \dbx,
\end{equation}
 \item $v_h = x_i$, $i\in\{1,2\}$, yields
\begin{align*}
 b^k\frac{h_i^3h_k}{12} + a_i{\tau_\edge} \frac{h_i^3+3h_i^2h_k}6
 = {\tau_\edge} \int_{\partial \edge} \lambda_h x_i \ds + \int_\edge f x_i\dbx,
\end{align*}
where $k\neq i$, $k\in\{1,2\}$,
 \item $v_h = x_1 x_2$, yields
\begin{equation*}
 b = \frac{24}{h_1^2h_2^2(h_1+h_2)}\left(\int_{\partial\edge}\lambda_h x_1 x_2 \ds
 + \frac1{\tau_\edge} \int_{\edge} f x_1 x_2 \dbx\right).
\end{equation*}
\end{itemize}
Of course, if $U(\edge)\neq\polynomials_1(\edge)$ or 
$\vec Q(\edge)\neq\polynomials_1(\edge)^2$, then only those of the above
formulas are valid for which the used test functions belong to the respective
space. Combining the above relations, it is easy to compute the coefficients in
\eqref{eq:uh_qh_general}.
\end{remark}

\section{HDG is (mimetic) finite difference method on graphs}\label{SEC:hdg_is_fd}
%
To prove Theorem \ref{TH:main_result} on graphs, we prove that our linear HDG method coincides with the finite difference approach. That is, we have:
\begin{theorem}\label{TH:hdg_is_fd}
Let $\Omega$ be a graph in the sense of item (\ref{IT:graph}) from the
beginning of Section~\ref{sec:hdg_discr}, $U(\edge)=\polynomialsp_1(\edge)$ and
$Q(\edge)=\polynomialsp_0(\edge)$ or $Q(\edge)=\polynomialsp_1(\edge)$ for all 
$\edge\in\setEdge$, $M(\node)=\IR$
for all $\node\in\setNode$, and let $\tau_E>0$ and $\kappa$ be edgewise constant, i.e.,
$\kappa|_\edge^{} = \kappa_\edge \in \IR$ for all $\edge\in\setEdge$. For
$\lambda_h\in M_h$ and $\node\in\setNode$, denote
$\lambda_\node=\lambda_h(\node)$. For $\node\in\setNode$ and an edge
$\edge\in\setEdge$ with $\partial\edge\ni\node$, denote by 
$\node'\in\setNode\cap\partial\edge$ with $\node' \neq \node$ the other
endpoint of $E$. Furthermore, let $\varphi^\edge_\Node$ be the linear function
on $\edge$ that satisfies $\varphi^\edge_\Node(\Node) = 1$ and
$\varphi^\edge_\Node(\node') = 0$. Then $\lambda_h\in M_h$ is a solution of the
HDG method \eqref{EQ:hdg_scheme} if and only if $\lambda_\node = g_\textup
D(\node)$ for $\node\in\setNode\cap\Gamma_\textup D$ and $\lambda_h$ satisfies
the finite difference equations
\begin{equation}\label{EQ:fd_system}
  \sum_{\edge\in\setEdge,\,\partial E\ni\node}\,
  \frac{\kappa_\edge}{|\edge|} (\lambda_\Node - \lambda_{\node'}) 
  = \sum_{\edge\in\setEdge,\,\partial E\ni\node}\, 
  \int_\edge f \varphi^\edge_\Node\dx
\end{equation}
for every $\node \in \setNode$ with $\node \not \in \Gamma_\textup D$.
\end{theorem}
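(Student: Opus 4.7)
The plan is to establish the equivalence by working edge-locally: given the skeleton values $\lambda_\node, \lambda_{\node'}$ at the endpoints of an edge $\edge$, I extract enough information about the local $u_h, q_h$ to reduce the global coupling equation \eqref{EQ:hdg_global_equiv} to \eqref{EQ:fd_system}. The crucial observation is that the constant test function $p_h\equiv1$ lies in both candidate spaces $\polynomialsp_0(\edge)$ and $\polynomialsp_1(\edge)$, so the argument handles both choices of $Q(\edge)$ uniformly and does not require committing to a specific flux space.

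The main computation is as follows. First, testing \eqref{EQ:hdg_primary} with $p_h=1$ (so $\div p_h=0$) and using that $\kappa$ is constant on $\edge$ immediately yields
\begin{equation*}
   \overline{q_h} \;:=\; \tfrac{1}{|\edge|}\!\int_\edge q_h\dx
   \;=\;\tfrac{\kappa_\edge}{|\edge|}\,(\lambda_\node-\lambda_{\node'}).
\end{equation*}
Next, testing \eqref{EQ:hdg_flux} with $v_h=\varphi^\edge_\node\in\polynomialsp_1(\edge)=U(\edge)$ and integrating the term $\int_\edge\varphi^\edge_\node q_h'\dx$ by parts (valid since $q_h$ is a polynomial) produces $q_h(\node)\,n_\edge(\node)+\overline{q_h}$ (after using $\varphi^\edge_\node(\node)=1$, $\varphi^\edge_\node(\node')=0$, and $(\varphi^\edge_\node)'=n_\edge(\node)/|\edge|$). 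Rearranging and substituting the formula for $\overline{q_h}$ gives the key identity
\begin{equation*}
   q_h(\node)\,n_\edge(\node)+\tau_\edge\bigl(u_h(\node)-\lambda_\node\bigr)
   \;=\;-\tfrac{\kappa_\edge}{|\edge|}\,(\lambda_\node-\lambda_{\node'})
   \;+\;\int_\edge f\,\varphi^\edge_\node\dx.
\end{equation*}

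With this identity in hand, the forward direction (HDG $\Rightarrow$ FD) is immediate: summing over all $\edge\in\setEdge$ with $\partial\edge\ni\node$ and invoking \eqref{EQ:hdg_global_equiv} at an interior node $\node$ makes the left-hand side vanish, yielding \eqref{EQ:fd_system}. For the converse (FD $\Rightarrow$ HDG), I would argue via uniqueness: a short energy computation (taking $p_h=q_h$, $v_h=u_h$ in the local equations on a single edge with $\lambda_\node=\lambda_{\node'}=0$ and $f=0$) shows that the local element problem, given arbitrary trace values, possesses a unique solution $(u_h|_\edge,q_h|_\edge)$. Given any $\lambda_h$ solving \eqref{EQ:fd_system} with the correct Dirichlet data, define $(u_h,q_h)$ edge by edge via this local solve; then \eqref{EQ:hdg_primary} and \eqref{EQ:hdg_flux} hold by construction, and \eqref{EQ:hdg_global_equiv} follows from the key identity combined with \eqref{EQ:fd_system}, so the resulting triple is an HDG solution.

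The hardest part is really just the bookkeeping of signs in the integration-by-parts step, since $n_\edge(\node)\in\{-1,+1\}$ depending on which endpoint of $\edge$ the node $\node$ is, and one must verify that the resulting identity has the same form regardless of orientation. The identity $(\varphi^\edge_\node)'=n_\edge(\node)/|\edge|$ makes the derivation orientation-free, but writing it cleanly requires some care. Once that is checked, nothing else in the argument depends on $Q(\edge)$ being $\polynomialsp_0$ versus $\polynomialsp_1$, nor on further properties of $q_h$ beyond being a polynomial on the edge, so both cases of the theorem are dispatched simultaneously.
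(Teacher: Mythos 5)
Your proof is correct, and while it follows the same overall strategy as the paper (eliminate the local unknowns edge by edge, express the numerical flux $\vec q_h\cdot\Normal_E+\tau_E(u_h-\lambda_h)$ at each endpoint in terms of $\lambda_h$ alone, and substitute into the global conservation condition \eqref{EQ:hdg_global_equiv}), your derivation of the key flux identity is genuinely more economical. The paper maps $\edge$ to $(a,b)$, writes $q_h=\alpha+\beta r$, solves explicitly for both coefficients (\eqref{eq:rel2} and \eqref{eq:rel4}), computes $u_h(a)$ and $u_h(b)$ from \eqref{EQ:bdr_vals}, and only then assembles \eqref{eq:fluxes}; it also treats $Q(\edge)=\polynomialsp_1(\edge)$ and $Q(\edge)=\polynomialsp_0(\edge)$ as separate cases. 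You instead test \eqref{EQ:hdg_flux} with $\varphi^\edge_\node$ and integrate by parts, so that the boundary term $q_h(\node)\,n_\edge(\node)$ is exactly the flux contribution you need and never has to be evaluated, while the remaining volume term only requires the mean $\overline{q_h}$, obtained from the single test function $p_h\equiv1\in Q(\edge)$ in either case. This buys a uniform treatment of both flux spaces and avoids computing $\beta$ entirely; the paper's explicit formulas, on the other hand, are reused later (e.g.\ \eqref{eq:rel4} and \eqref{EQ:bdr_vals} feed into the subsequent lemmas on nonnegativity of $u_h$), so the extra work there is not wasted. One small imprecision: your intermediate formulas $\overline{q_h}=\tfrac{\kappa_\edge}{|\edge|}(\lambda_\node-\lambda_{\node'})$ and ``$q_h(\node)\,n_\edge(\node)+\overline{q_h}$'' are orientation-dependent (the correct orientation-free combination is $-n_\edge(\node)\,\overline{q_h}=\tfrac{\kappa_\edge}{|\edge|}(\lambda_\node-\lambda_{\node'})$), but the two sign dependencies cancel and your final key identity is correct for both endpoints, as you anticipate. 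Your converse via local well-posedness of the element solver is also sound and matches what the paper implicitly relies on.
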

\begin{proof}
Let us consider \eqref{EQ:hdg_primary}, \eqref{EQ:hdg_flux} for an edge 
$\edge\in\setEdge$, that we isometrically map to the interval $(a,b)$ with
$b - a = |\edge|$. Denote the $\polynomialsp_1(\edge)$ basis function
associated with the node $a$ by $\varphi_a$ and the $\polynomialsp_1(\edge)$
basis function associated with the node $b$ by $\varphi_b$. Furthermore,
observe that $\vec p_h$, $\vec q_h$ are scalar functions which will be
denoted by $p_h$, $q_h$, respectively. Throughout the proof, the functions
$u_h$, $q_h$, and $\lambda_h$ are tacitly assumed to be restricted to $E$ if
they are evaluated at $a$ or $b$.

Let us start with the case $Q(\edge)=P_1(\edge)$.
Testing \eqref{EQ:hdg_flux} with $v_h = 1$ yields
\begin{equation*}
  \frac{2 {\tau_E}}{|\edge|} \int_\edge u_h \dx = {\tau_E} (u_h(a) + u_h(b)) =
{\tau_E} (\lambda_h(a) + \lambda_h(b)) +\int_\edge f \dx - \int_\edge q_h^\prime \dx.
\end{equation*}
Since $p_h^\prime|_E^{}$ is constant, we can plug this relation into
\eqref{EQ:hdg_primary} to obtain
 \begin{multline*}
  \int_\edge \kappa_E^{-1} q_h p_h\dx - p_h^\prime|_E^{} \frac{|\edge|}{2 {\tau_E}} \left[
{\tau_E} (\lambda_h(a) + \lambda_h(b)) + \int_\edge f \dx - \int_\edge q_h^\prime \dx
\right] \\
  = \lambda_h(a) p_h(a) - \lambda_h(b) p_h(b),
\end{multline*}
which we can rewrite as
\begin{multline}\label{eq:rel1}
  \int_\edge \kappa_E^{-1} q_h p_h\dx 
  + \frac{|\edge|}{2 {\tau_E}} \int_\edge p_h^\prime q_h^\prime \dx \\
  = \frac{|\edge|}{2 {\tau_E}} 
  \left[ {\tau_E} (\lambda_h(a) + \lambda_h(b)) p_h^\prime|_E^{}
  + \int_\edge f p_h^\prime \dx \right] + \lambda_h(a) p_h(a) - \lambda_h(b) p_h(b).
\end{multline}
Since we consider linear polynomials, we can write 
$q_h|_E^{} = \alpha + \beta r$ for some $\alpha, \beta \in \IR$ and 
$r = \varphi_b - \varphi_a$. Testing with $p_h = 1$ and using the fact that
$\int_\edge r \dx = 0$ allows us to deduce that
\begin{equation*}
  \int_\edge \kappa_E^{-1} (\alpha + \beta r) \dx 
  = \frac{|\edge|}{\kappa_E} \alpha = \lambda_h(a) - \lambda_h(b), 
\end{equation*}
which gives
\begin{equation}\label{eq:rel2}
   \alpha = \frac{{\kappa_E}}{|\edge|} (\lambda_h(a) - \lambda_h(b)).
\end{equation}
Observing that $r^\prime = 2 / |\edge|$ and $\int_\edge r^2\dx = |\edge| / 3$, it
follows from \eqref{eq:rel1} with $p_h|_E^{}=r$ that
 \begin{equation*}
  \beta \left[ \int_\edge \kappa_E^{-1} r^2 \dx + \frac{|\edge|}{2 {\tau_E}}
\int_\edge (r^\prime)^2 \dx \right] = \frac1{\tau_E} \int_\edge f \dx,
 \end{equation*}
which implies that
\begin{equation}\label{eq:rel4}
  \beta = \frac{3 {\kappa_E}}{6 {\kappa_E} + {\tau_E} |\edge|} \int_\edge f \dx.
\end{equation}
Having computed $q_h|_E^{}$, we can obtain $u_h(a)$ and $u_h(b)$ by testing
\eqref{EQ:hdg_flux} with $v_h|_E^{}=\varphi_a$ and $v_h|_E^{}=\varphi_b$
yielding
\begin{subequations}\label{EQ:bdr_vals}
 \begin{align}
  {\tau_E} u_h(a) & = {\tau_E} \lambda_h(a) + \int_\edge f \varphi_a \dx -
\int_\edge q_h^\prime \varphi_a \dx \notag\\
  & = {\tau_E} \lambda_h(a) + \int_\edge f \varphi_a \dx - \frac{3 {\kappa_E}}{6 {\kappa_E} + {\tau_E} |\edge|} \int_\edge f \dx,\\
  {\tau_E} u_h(b) & = {\tau_E} \lambda_h(b) + \int_\edge f \varphi_b \dx - \frac{3 {\kappa_E}}{6 {\kappa_E} + {\tau_E} |\edge|} \int_\edge f \dx.
 \end{align}
\end{subequations}
Since $(\vec q_h\cdot\Normal_E)(a)=-q_h(a)$ and 
$(\vec q_h\cdot\Normal_E)(b)=q_h(b)$, the relations \eqref{EQ:bdr_vals} and the
expression for $q_h|_E^{}$ imply that
\begin{subequations}\label{eq:fluxes}
 \begin{align}
  (\vec q_h\cdot\Normal_E + {\tau_E} (u_h- \lambda_h))(a) & = \frac{{\kappa_E}}{|\edge|} (\lambda_h(b) - \lambda_h(a)) + \int_\edge f \varphi_a \dx, \\
  (\vec q_h\cdot\Normal_E + {\tau_E} (u_h - \lambda_h))(b)
  & = \frac{{\kappa_E}}{|\edge|} (\lambda_h(a) - \lambda_h(b)) + \int_\edge f \varphi_b \dx.
 \end{align}
\end{subequations}

If $Q(\edge)=P_0(\edge)$, then \eqref{eq:rel1} still holds and implies that
$q_h|_\edge^{}=\alpha$ with $\alpha$ given by \eqref{eq:rel2}. Then, like
before, it follows from \eqref{EQ:hdg_flux} that
\begin{equation}\label{eq:rel5}
  {\tau_E} u_h(a) = {\tau_E} \lambda_h(a) + \int_\edge f \varphi_a \dx,\qquad
  {\tau_E} u_h(b) = {\tau_E} \lambda_h(b) + \int_\edge f \varphi_b \dx.
\end{equation}
Therefore, the relations \eqref{eq:fluxes} again hold.

Plugging the relations \eqref{eq:fluxes} into \eqref{EQ:hdg_global_equiv}
yields the result.
\end{proof}

Theorem~\ref{TH:hdg_is_fd} implies that the HDG method on graphs can be interpreted as a finite difference method. However, this finite difference method is equipped with physically meaningful fluxes, and is locally conservative if interpreted as an HDG scheme. Moreover, e.g. symmetry of the diffusion equation is preserved in the discrete level \cite{CockburnGL09,RuppGK22}. Thus, it is \emph{mimetic} in the sense of \cite{HymanMSS02}.

\begin{theorem}\label{TH:hdg_is_fd2}
Let $\Omega$, $\kappa$, $\tau_E$, and $M(\node)$ be like in Theorem~\ref{TH:hdg_is_fd}
and let $U(\edge)=\polynomialsp_0(\edge)$ and $Q(\edge)=\polynomialsp_0(\edge)$
or $Q(\edge)=\polynomialsp_1(\edge)$ for all $\edge\in\setEdge$. Then
$\lambda_h\in M_h$ is a solution of the HDG method \eqref{EQ:hdg_scheme} if and
only if $\lambda_\node = g_\textup D(\node)$ for
$\node\in\setNode\cap\Gamma_\textup D$ and $\lambda_h$ satisfies the finite
difference equations
\begin{equation}\label{EQ:fd_system_tau}
  \sum_{\edge\in\setEdge,\,\partial E\ni\node}\,
  \left(\frac{\kappa_\edge}{|\edge|}+\frac{\tau_E}2\right)
  (\lambda_\Node - \lambda_{\node'}) 
  = \frac12\,\sum_{\edge\in\setEdge,\,\partial E\ni\node}\, 
  \int_\edge f\dx
\end{equation}
for every $\node \in \setNode$ with $\node \not \in \Gamma_\textup D$, where we
use the notation introduced in Theorem~\ref{TH:hdg_is_fd}.
\end{theorem}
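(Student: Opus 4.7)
The plan is to mirror the strategy of Theorem~\ref{TH:hdg_is_fd} and carry out the local elimination of $\vec q_h$ and $u_h$ on each edge, but now exploiting that $u_h|_\edge^{}=u_\edge$ is a single constant. Working on a single edge $\edge$ isometrically mapped to $(a,b)$ with $|\edge|=b-a$, I would use the notations $\varphi_a,\varphi_b$, $q_h$, $p_h$ introduced in the proof of Theorem~\ref{TH:hdg_is_fd}. Because $u_h$ is constant, the bulk term $\int_\edge u_h p_h'\dx$ in \eqref{EQ:hdg_primary} equals $u_\edge(p_h(b)-p_h(a))$, while the boundary contribution from $\lambda_h$ is unchanged.

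First I would determine $q_h$ from \eqref{EQ:hdg_primary}. For $Q(\edge)=\polynomialsp_0(\edge)$, testing with $p_h=1$ gives directly $q_h=\alpha$ with $\alpha=(\kappa_\edge/|\edge|)(\lambda_h(a)-\lambda_h(b))$, exactly as in \eqref{eq:rel2}. For $Q(\edge)=\polynomialsp_1(\edge)$, writing $q_h=\alpha+\beta r$ with $r=\varphi_b-\varphi_a$, the test $p_h=1$ again yields the same $\alpha$, while testing with $p_h=r$ produces a relation of the form $\beta=(3\kappa_\edge/|\edge|)\,(2u_\edge-\lambda_h(a)-\lambda_h(b))$. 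Next I would use \eqref{EQ:hdg_flux} with $v_h=1$: since $u_h$ is constant, this reads
\begin{equation*}
  (q_h(b)-q_h(a))+2\tau_\edge u_\edge=\tau_\edge(\lambda_h(a)+\lambda_h(b))+\int_\edge f\dx.
\end{equation*}
For $Q=\polynomialsp_0$ this is an explicit formula for $u_\edge$; for $Q=\polynomialsp_1$ the term $q_h(b)-q_h(a)=2\beta$ together with the formula for $\beta$ gives a single linear equation from which $u_\edge$ is solved.

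Next I would compute the numerical flux $q_h\cdot\Normal_\edge+\tau_\edge(u_h-\lambda_h)$ at $a$ and $b$. The crucial (and only slightly delicate) step is showing that in both cases $Q=\polynomialsp_0$ and $Q=\polynomialsp_1$ the resulting expressions coincide, namely
\begin{equation*}
  \bigl(q_h\cdot\Normal_\edge+\tau_\edge(u_h-\lambda_h)\bigr)(a)
  =\Bigl(\tfrac{\kappa_\edge}{|\edge|}+\tfrac{\tau_\edge}{2}\Bigr)
  (\lambda_h(b)-\lambda_h(a))+\tfrac12\int_\edge f\dx,
\end{equation*}
and the analogous expression with $a$ and $b$ swapped. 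The reason this works out is that in the $\polynomialsp_1$ case the $u_\edge$-dependence in $\beta$ is exactly cancelled by the $\tau_\edge u_\edge$ term of the flux thanks to the mass balance $\beta+\tau_\edge u_\edge=\tfrac{\tau_\edge}{2}(\lambda_h(a)+\lambda_h(b))+\tfrac12\int_\edge f\dx$; this cancellation is the main obstacle to verify carefully. Compared to Theorem~\ref{TH:hdg_is_fd} the effective conductance on $\edge$ is increased from $\kappa_\edge/|\edge|$ to $\kappa_\edge/|\edge|+\tau_\edge/2$, reflecting the mass-lumping induced by the piecewise constant choice of $U(\edge)$.

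Finally, inserting these edge flux expressions into the global conservation constraint \eqref{EQ:hdg_global_equiv} at an interior node $\node$ and rearranging yields
\begin{equation*}
  \sum_{\edge\in\setEdge,\,\partial\edge\ni\node}
   \Bigl(\tfrac{\kappa_\edge}{|\edge|}+\tfrac{\tau_\edge}{2}\Bigr)
  (\lambda_\Node-\lambda_{\node'})
  =\tfrac12\sum_{\edge\in\setEdge,\,\partial\edge\ni\node}\int_\edge f\dx,
\end{equation*}
which is exactly \eqref{EQ:fd_system_tau}. The converse direction is immediate, since given any $\lambda_h$ satisfying \eqref{EQ:fd_system_tau} one can define $\vec q_h$ and $u_h$ edgewise by the explicit formulas above; the local equations \eqref{EQ:hdg_primary} and \eqref{EQ:hdg_flux} then hold by construction and \eqref{EQ:hdg_global_equiv} is equivalent to \eqref{EQ:fd_system_tau}.
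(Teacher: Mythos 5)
Your proposal is correct and follows essentially the same route as the paper: local elimination of $\vec q_h$ and the constant $u_h|_\edge$ on each edge, arriving at the same numerical-flux formula $\bigl(\tfrac{\kappa_\edge}{|\edge|}+\tfrac{\tau_\edge}{2}\bigr)(\lambda_h(b)-\lambda_h(a))+\tfrac12\int_\edge f\dx$ at each endpoint, and then inserting this into \eqref{EQ:hdg_global_equiv}. The only (immaterial) difference is the order of elimination—you keep $u_\edge$ as an unknown in the formula for $\beta$ and cancel it via the mass balance, whereas the paper substitutes the mean of $u_h$ into \eqref{EQ:hdg_primary} first so that $q_h|_E^{}$ is obtained directly; both yield identical expressions.
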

\begin{proof}
The proof follows the lines of the proof of Theorem~\ref{TH:hdg_is_fd}. We
again start with the case $Q(\edge)=P_1(\edge)$. Like before, testing 
\eqref{EQ:hdg_flux} with $v_h = 1$ yields
\begin{equation}\label{eq:rel3}
  2\tau_E\,u_h|_\edge^{} = 
  \tau_E (\lambda_h(a) + \lambda_h(b)) +\int_\edge f \dx - \int_\edge q_h^\prime \dx
\end{equation}
so that \eqref{eq:rel1} again holds and hence $q_h|_E^{}$ is the same as in the
proof of Theorem~\ref{TH:hdg_is_fd}. Thus, using \eqref{eq:rel3}, it follows
that
\begin{align*}
  (\vec q_h\cdot\Normal_E + {\tau_E} (u_h- \lambda_h))(a) & = 
  \left(\frac{{\kappa_E}}{|\edge|}+\frac{\tau_E}2\right)
  (\lambda_h(b) - \lambda_h(a)) + \frac12\,\int_\edge f\dx, \\
  (\vec q_h\cdot\Normal_E + {\tau_E} (u_h - \lambda_h))(b) & = 
  \left(\frac{{\kappa_E}}{|\edge|}+\frac{\tau_E}2\right)
  (\lambda_h(a) - \lambda_h(b)) + \frac12\,\int_\edge f\dx
\end{align*}
and the same relations also hold $Q(\edge)=\polynomialsp_0(\edge)$. Thus, 
\eqref{EQ:hdg_global_equiv} implies \eqref{EQ:fd_system_tau}.
\end{proof}

\begin{remark}
If the graph considered in the above theorems represents a division of an
interval and $\kappa$ and $f$ are constant, then we observe that, under the
assumptions of Theorem~\ref{TH:hdg_is_fd}, the HDG solution $\lambda_h$
coincides with the standard finite difference solution of the equation
$-\kappa u^{\prime\prime}=f$. Under the assumptions of
Theorem~\ref{TH:hdg_is_fd2}, the solution $\lambda_h$ is different. In
particular, if $\tau_\edge=\tau$ for any $\edge\in\setEdge$ and the division is
equidistant with mesh width $h$, then $\lambda_h$ is the standard finite
difference solution of the equation $-(\kappa+\tau h/2)u^{\prime\prime}=f$.
\end{remark}

\begin{lemma}
Under the assumptions of Theorems~\ref{TH:hdg_is_fd} and~\ref{TH:hdg_is_fd2},
the finite difference systems \eqref{EQ:fd_system} and
\eqref{EQ:fd_system_tau}, respectively, are well-posed and their solutions
attain only nonnegative values. Moreover, if
$U(\edge)=\polynomialsp_0(\edge)$ or $Q(\edge)=\polynomialsp_0(\edge)$ for any
$\edge\in\setEdge$, then the solution $u_h$ of the HDG method
\eqref{EQ:hdg_scheme} satisfies $u_h\ge0$. If both
$U(\edge)=\polynomialsp_1(\edge)$ and $Q(\edge)=\polynomialsp_1(\edge)$, then
in general only the mean value $u_{0,\edge}$ introduced in
Theorem~\ref{TH:main_result} is nonnegative.
\end{lemma}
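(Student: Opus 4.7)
My plan is to address the three assertions in order, in each case exploiting the explicit local formulas already derived in the proofs of Theorems~\ref{TH:hdg_is_fd} and~\ref{TH:hdg_is_fd2}.

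For well-posedness and nonnegativity of $\lambda_h$, I would observe that both \eqref{EQ:fd_system} and \eqref{EQ:fd_system_tau} have the common form $\sum_{\edge\in\setEdge,\,\partial E\ni\node}\,c_E\,(\lambda_\node-\lambda_{\node'})=b_\node$ at every interior node, with strictly positive edge weights $c_E$ (either $\kappa_\edge/|\edge|$ or $\kappa_\edge/|\edge|+\tau_E/2$) and nonnegative right-hand side $b_\node$. After elimination of the Dirichlet degrees of freedom, the resulting coefficient matrix is a weighted graph Laplacian on the interior nodes: it is symmetric, has positive diagonal, nonpositive off-diagonal entries, is weakly diagonally dominant, and is strictly dominant in rows whose node is connected to $\Gamma_\textup D$ (which is nonempty by assumption on $\Omega$). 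Hence the matrix is an invertible M-matrix with nonnegative inverse; together with $g_\textup D\ge0$ and $f\ge0$ this gives both unique solvability and $\lambda_\node\ge0$ for every $\node$.

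For the pointwise nonnegativity of $u_h$ when either $U(\edge)=\polynomialsp_0(\edge)$ or $Q(\edge)=\polynomialsp_0(\edge)$ holds on all edges, I would simply read off $u_h$ from the intermediate relations in the two proofs cited. If $U(\edge)=\polynomialsp_0(\edge)$, then $u_h|_\edge^{}$ is a constant and equation \eqref{eq:rel3} combined with the explicit form of $q_h$ (either $q_h'=0$ or $\int_\edge q_h'\dx=2\beta$ with $\beta$ from \eqref{eq:rel4}) collapses to
\begin{equation*}
   u_h|_\edge^{}=\frac{\lambda_h(a)+\lambda_h(b)}{2}+C_\edge\int_\edge f\dx,\qquad C_\edge>0,
\end{equation*}
which is nonnegative by the previous step. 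If instead $U(\edge)=\polynomialsp_1(\edge)$ and $Q(\edge)=\polynomialsp_0(\edge)$, then $\int_\edge q_h'\dx=0$ and \eqref{eq:rel5} yields $\tau_E u_h(\node)=\tau_E\lambda_h(\node)+\int_\edge f\varphi_\node\dx\ge0$ at each endpoint $\node\in\partial\edge$; nonnegativity on all of $\edge$ then follows from linearity of $u_h|_\edge^{}$.

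Finally, for $U(\edge)=Q(\edge)=\polynomialsp_1(\edge)$, summing the two relations in \eqref{EQ:bdr_vals} and halving gives
\begin{equation*}
   u_{0,\edge}=\frac{u_h(a)+u_h(b)}{2}=\frac{\lambda_h(a)+\lambda_h(b)}{2}+\frac{|\edge|}{2(6\kappa_\edge+\tau_E|\edge|)}\int_\edge f\dx\ge0,
\end{equation*}
so the mean value is automatically nonnegative. The subtle point, which I expect to be the main obstacle, is that the individual boundary values in \eqref{EQ:bdr_vals} each carry the nonlocal correction $-\tfrac{3\kappa_\edge}{6\kappa_\edge+\tau_E|\edge|}\int_\edge f\dx$ which cancels only in the averaged identity; when $f$ is concentrated near one endpoint, that correction exceeds the local contribution $\int_\edge f\varphi_\node\dx$ at the opposite endpoint and $u_h$ becomes negative there. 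Hence ``in general'' pointwise nonnegativity indeed fails, and the corresponding counterexamples are those deferred to Section~\ref{SEC:counter}.
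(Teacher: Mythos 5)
Your proposal is correct and follows essentially the same route as the paper: an M-matrix argument for the condensed finite difference system (the paper deduces nonsingularity from Theorem~\ref{thm:hdg_solvability} and cites a corollary on matrices of nonnegative type, whereas you argue via irreducible diagonal dominance of the weighted graph Laplacian, which amounts to the same thing), followed by reading off $u_h$ from \eqref{eq:rel5} and \eqref{eq:rel3}--\eqref{eq:rel4}, and averaging \eqref{EQ:bdr_vals} for $u_{0,\edge}$. Your closing observation about why pointwise nonnegativity fails for $\polynomialsp_1\times\polynomialsp_1$ when $f$ is concentrated near one endpoint is a correct and useful addition that the paper leaves implicit.
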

\begin{proof}
The well-posedness of \eqref{EQ:fd_system} follows from
Theorem~\ref{thm:hdg_solvability}. Since the matrix corresponding to the
left-hand side of \eqref{EQ:fd_system} has nonpositive off-diagonal entries and
zero row sums, it is a matrix of nonnegative type. Thus, completing
\eqref{EQ:fd_system} by the Dirichlet boundary conditions, the resulting square
system matrix is an M-matrix, see \cite[Corollary 3.13]{BarrenecheaJK24},
which implies the result for $\lambda_h$. If
$U(\edge)=\polynomialsp_0(\edge)$ or $Q(\edge)=\polynomialsp_0(\edge)$ for any
$\edge\in\setEdge$, then $u_h\ge0$ in view of \eqref{eq:rel5}, \eqref{eq:rel3},
and the formula \eqref{eq:rel4} implying that 
$\int_\edge q_h^\prime\dx\le\int_\edge f\dx$. If $U(\edge)=\polynomialsp_1(\edge)$
and $Q(\edge)=\polynomialsp_1(\edge)$, then the result for $u_{0,\edge}$
follows from taking the mean of the equations in \eqref{EQ:bdr_vals}.
\end{proof}

\begin{lemma}
Let the assumptions of Theorem \ref{TH:hdg_is_fd} hold,
$U(\edge)=\polynomialsp_1(\edge)$ and $Q(\edge)=\polynomialsp_1(\edge)$ for
all $\edge\in\setEdge$ and $f$ be constant on each $\edge \in \setEdge$. Then
the solution $u_h$ of the HDG method \eqref{EQ:hdg_scheme} satisfies
$u_h\ge0$.
\end{lemma}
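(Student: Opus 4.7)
The plan is to exploit the explicit endpoint formulas \eqref{EQ:bdr_vals} that were established in the proof of Theorem~\ref{TH:hdg_is_fd} together with the nonnegativity of $\lambda_h$ supplied by the previous lemma. Since $u_h|_\edge$ is affine, the desired conclusion $u_h|_\edge\ge 0$ reduces to checking nonnegativity of the two values $u_h(a)$ and $u_h(b)$ on each edge $\edge\in\setEdge$.

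Concretely, I would fix an edge $\edge\in\setEdge$ with endpoints $a,b$, recall that $f$ is constant on $\edge$ so that
\[
\int_\edge f\varphi_a\dx=\int_\edge f\varphi_b\dx=\tfrac12\int_\edge f\dx,
\]
and substitute this into \eqref{EQ:bdr_vals}. After simplifying the coefficient
\[
\frac{1}{2}-\frac{3\kappa_\edge}{6\kappa_\edge+\tau_\edge|\edge|}
=\frac{\tau_\edge|\edge|}{2\bigl(6\kappa_\edge+\tau_\edge|\edge|\bigr)},
\]
the formulas reduce to
\[
u_h(a)=\lambda_h(a)+\frac{|\edge|^2\int_\edge f\dx}{2|\edge|\bigl(6\kappa_\edge+\tau_\edge|\edge|\bigr)},\qquad
u_h(b)=\lambda_h(b)+\frac{|\edge|^2\int_\edge f\dx}{2|\edge|\bigl(6\kappa_\edge+\tau_\edge|\edge|\bigr)}.
\]
Since $\kappa_\edge>0$, $\tau_\edge>0$, $f\ge 0$, and the previous lemma gives $\lambda_h(a),\lambda_h(b)\ge 0$, both endpoint values are nonnegative.

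Finally, because $u_h|_\edge\in\polynomialsp_1(\edge)$ is an affine function on the one-dimensional edge $\edge$, nonnegativity at the two endpoints $a$ and $b$ forces $u_h|_\edge\ge 0$ on all of $\edge$, and hence $u_h\ge 0$ globally. I do not anticipate a serious obstacle here: the only subtlety is the algebraic simplification that turns the apparently sign-indefinite correction term $\int_\edge f\varphi_a\dx-\frac{3\kappa_\edge}{6\kappa_\edge+\tau_\edge|\edge|}\int_\edge f\dx$ into something manifestly positive when $f$ is a constant, and this simplification works out cleanly as above. If $f$ were only nonnegative (not constant), this correction could become negative, which is precisely why the constancy of $f$ is needed and why the previous lemma could only conclude nonnegativity of the mean $u_{0,\edge}$ in general.
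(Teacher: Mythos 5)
Your proposal is correct and follows essentially the same route as the paper: invoke $\lambda_h\ge0$ from the preceding lemma, use the constancy of $f$ to get $\int_\edge f\varphi_a\dx=\int_\edge f\varphi_b\dx=\tfrac12\int_\edge f\dx$ in \eqref{EQ:bdr_vals}, conclude $u_h(a),u_h(b)\ge0$, and finish by linearity of $u_h$ on each edge. Your explicit simplification of the coefficient to $\tau_\edge|\edge|/\bigl(2(6\kappa_\edge+\tau_\edge|\edge|)\bigr)$ is exactly the computation the paper leaves implicit.
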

\begin{proof}
We know that $\lambda_h \ge 0$ and that $u_h$ is a piecewise linear function.
Let us use the notation of the proof of Theorem \ref{TH:hdg_is_fd}. Since
$f$ is constant on $E$, we have
$\int_\edge f\varphi_a\dx=\int_\edge f\varphi_b\dx=\int_\edge f\dx/2$, and it
follows from \eqref{EQ:bdr_vals} that $u_h(a) \ge 0$ and $u_h(b) \ge 0$ so that
$u_h\ge0$ on $\edge$.
\end{proof}

\section{Positivity preservation of HDG methods on hypergraphs}\label{SEC:pos_pres_hg}

Let us investigate the positivity preservation of the HDG method
\eqref{EQ:hdg_scheme} for hypergraphs consisting of $d$-dimensional ($d>1$)
hyperedges.

\subsection{Piecewise constant approximations}\label{SEC:p0}
%
In this section, we consider piecewise constant approximations, i.e., we set 
$U(\edge)=\polynomialsp_0(\edge)$, $\vec Q(\edge)=\polynomialsp_0(\edge)^d$,
and $M(\node)=\polynomialsp_0(\node)$. Testing \eqref{EQ:hdg_primary} with
$\vec p_h = \vec e_k$, $k=1,\dots,d$, and \eqref{EQ:hdg_flux} with $v_h = 1$
immediately yields for any polyhedron $\edge\in\setEdge$ that
\begin{subequations}\label{EQ:local_reconstr}
\begin{align}
   \vec q_h|_\edge^{} &= \vec q_E(\lambda_h) := 
   -\bar\kappa_E\int_{\partial \edge}\lambda_h\Normal_\edge\ds\quad
   \text{with}\quad\bar\kappa_E=\frac{1}{\int_\edge \kappa^{-1} \dbx},
   \label{EQ:local_reconstr_q}
   \\
   u_h|_\edge^{}&= 
   u_E(\lambda_h) + \frac{1}{\tau_E|\partial \edge|} \int_\edge f \dbx
   \quad\text{with}\quad u_E(\lambda_h)=
   \frac{1}{|\partial\edge|} \int_{\partial \edge} \lambda_h \ds.
   \label{eq:uh_const}
\end{align}
\end{subequations}
Then \eqref{EQ:hdg_global} can be equivalently written in the form
\begin{equation}\label{eq:condensed}
   a_h(\lambda_h,\mu_h)=
   \sum_{\edge\in\setEdge}\,\frac{1}{|\partial \edge|} 
   \int_{\partial\Edge}\mu_h\ds\int_\edge f\dbx\qquad\forall\mu_h\in M_h^0
\end{equation}
with
\begin{equation}\label{eq:ah_pw_const}
  a_h(\lambda_h,\mu_h)=-\sum_{\edge\in\setEdge}\,
  \int_{\partial\Edge}\mu_h\,(\vec q_E(\lambda_h)\cdot\Normal_E 
  + \tau_E(u_E(\lambda_h) -\lambda_h))\ds.
\end{equation}
Denoting by $\mu_\Node\colon \Sigma_h \to \{0,1\}$ the characteristic function
of a hypernode $\node\in\setNode$ and setting 
$\lambda_\node=\lambda_h|_\node^{}$, the problem \eqref{eq:condensed} is
equivalent to the linear system
\begin{equation}\label{eq:condensed_system}
   \sum_{\node'\in\setNode}a_{\node\node'}\,\lambda_{\node'}
   =\sum_{\edge\in\setEdge,\,\partial\edge\supset\node}\,
   \frac{|N|}{|\partial \edge|}\int_\edge f\dbx\qquad
   \forall\node\in\setNode^0,
\end{equation}
where $a_{\node\node'}=a_h(\mu_{\Node'},\mu_\Node)$ and
\begin{equation*}
   \setNode^0 = \{\node\in\setNode\colon\node\not\subset\Gamma_\textup D\}.
\end{equation*}
Since $a_h(1,\mu_h)=0$ for any $\mu_h\in M_h^0$, the row sums of the matrix in
\eqref{eq:condensed_system} vanish. Moreover,
\begin{equation*}
   a_{\node\node'}=\sum_{\edge\in\setEdge,\,\partial\edge\supset\node,\node'}\,
   |\node|\,|\node'|
   \left(\bar\kappa_E\Normal_E|_\node^{}\cdot\Normal_E|_{\node'}^{}
   -\frac{\tau_\edge}{|\partial\edge|}\right)\quad
   \forall\node\in\setNode^0,\,\node'\in\setNode,\,\node\neq\node'.
\end{equation*}
If there is no $\edge\in\setEdge$ whose boundary contains both $\node$ and
$\node'$, then $a_{\node\node'}=0$. Thus, if
\begin{equation}\label{eq:cond1}
   \Normal_E|_\node^{}\cdot\Normal_E|_{\node'}^{}\le0\qquad
   \forall\edge\in\setEdge,\,\node,\node'\subset\partial\edge,\,\node\neq\node'
\end{equation}
or
\begin{equation}\label{eq:cond2}
   \tau_\edge\ge\bar\kappa_E|\partial\edge|\qquad\forall\edge\in\setEdge,
\end{equation}
then the off-diagonal entries of the matrix in \eqref{eq:condensed_system} are
nonpositive and hence the matrix is of nonnegative type. Now, completing
\eqref{eq:condensed_system} by the Dirichlet boundary condition 
$\lambda_h=g_{{\textup D},h}$ on $\Gamma_\textup D$, we obtain a system with a
square matrix of nonnegative type which is nonsingular due to
Theorem~\ref{thm:hdg_solvability}. Consequently, this square system matrix is
an M-matrix, see \cite[Corollary 3.13]{BarrenecheaJK24}, which implies that
$\lambda_h\ge0$ since the right-hand side of the linear system is nonnegative.
Then $u_h\ge0$ according to \eqref{eq:uh_const}.

Note that condition \eqref{eq:cond1} restricts the shapes of the hyperedges
$E$. For example, it is satisfied for nonobtuse $d$-simplices or for 
$d$-rectangles. On the other hand, condition \eqref{eq:cond2} requires that
$\tau_E\sim\kappa_\edge/\operatorname{diam}(\edge)$ with
$\kappa_E\approx\kappa|_\edge^{}$.

\subsection{Lowest-order Raviart--Thomas spaces on simplices}
%
In this section, we assume that all hyperedges are simplices. We consider 
the same spaces $U(\edge)=\polynomialsp_0(\edge)$ and
$M(\node)=\polynomialsp_0(\node)$ as in the previous section but we enlarge the
space $\vec Q(\edge)$ to $\vec Q(\edge)=\polynomialsRT_0(\edge)$, which
allows us to set $\tau_\edge\equiv0$. Since the non-hybrid Raviart--Thomas
method (RT) and its hybridized version (RT-H) produce the same 
solutions $u_h$ and $\vec q_h$, see Theorem 7.2.1 in \cite{boffi2013mixed}, the below analysis provides the positivity
preservation also for the RT case.

Assuming that $\kappa$ takes a constant value $\kappa_\edge>0$ on each
$\edge\in\setEdge$, the solution of the HDG method \eqref{EQ:hdg_scheme} with
$\tau_\edge=0$ satisfies
\begin{subequations}
\begin{align}
 \vec q_h|_\edge^{}(\vec x) & = \vec q_E(\lambda_h) + \frac1{d |\edge|}
\int_\edge f \dbx \; (\vec x - \vec x_\edge), \\
 u_h|_\edge^{} & = \frac{\kappa_\edge^{-1}}{d^2 |\edge|^2} \int_\edge f \dbx
\int_\edge \|\vec x - \vec x_\edge \|^2 \dbx + \frac1{d |\edge|} \int_{\partial \edge}
\lambda_h \underbrace{ (\vec x - \vec x_\edge) \cdot \Normal_\edge }_{ \ge 0 } \ds,
\label{eq:uh_RT}
\end{align}
\end{subequations}
where $\vec q_E(\lambda_h)$ is defined by \eqref{EQ:local_reconstr_q} and
$\vec x_\edge$ denotes the barycenter of $\edge$. Then \eqref{EQ:hdg_global}
can be equivalently written in the form
\begin{equation}\label{eq:condensedRT}
   a_h(\lambda_h,\mu_h)=
   \sum_{\edge\in\setEdge}\,\frac1{d|\edge|}\int_{\partial\Edge}\mu_h
   (\vec x-\vec x_\edge)\cdot\Normal_\edge\ds\int_\edge f\dbx\qquad
   \forall\mu_h\in M_h^0,
\end{equation}
where $a_h$ is defined by \eqref{eq:ah_pw_const} with $\tau_\edge=0$. Denoting
by $v$ the height of a simplex $\edge$ on its face $\node$, one has 
$(\vec x-\vec x_\edge)\cdot\Normal_\edge=v/(d+1)$ for any $\vec x\in\node$ and
hence
\begin{equation}\label{eq:identity}
   \int_\node(\vec x-\vec x_\edge)\cdot\Normal_\edge\ds=\frac{|\node|v}{d+1}
   =\frac{d|\edge|}{d+1}.
\end{equation}
Thus, using the notation of Section~\ref{SEC:p0}, the problem
\eqref{eq:condensedRT} is equivalent to the linear system
\begin{equation}\label{eq:condensed_systemRT}
   \sum_{\node'\in\setNode}a_{\node\node'}\,\lambda_{\node'}
   =\frac1{d+1}\sum_{\edge\in\setEdge,\,\partial\edge\supset\node}\,
   \int_\edge f\dbx\qquad\forall\node\in\setNode^0.
\end{equation}
Therefore, if the mesh assumption \eqref{eq:cond1} holds, it follows in the
same way as in Section~\ref{SEC:p0} that $\lambda_h\ge0$, which implies
$u_h\ge0$ in view of \eqref{eq:uh_RT}.

\begin{remark}[RT-H method with \(\tau_\edge > 0\)]
If the HDG method \eqref{EQ:hdg_scheme} is considered with \(\tau_\edge\ge0\),
then, denoting
\[ A = \kappa_\edge^{-1} \int_\edge \|\vec x - \vec x_\edge \|^2 \dbx 
\qquad \text{ and } \qquad B = d^2 |\edge|^2 + \tau_\edge |\partial \edge| A, \]
we can determine the local reconstruction of the solution as
 \begin{align*}
  \vec q_h|_\edge^{}(\vec x) & =  \frac{\vec x - \vec x_\edge}{B} \left[ d |\edge|
\tau_\edge \int_{\partial \edge} \lambda_h \ds - \tau_\edge | \partial \edge |
\int_{\partial \edge} \lambda_h (\vec x - \vec x_\edge) \cdot \Normal_\edge
\dbx + d |\edge| \int_\edge f \dbx \right] \\
  & \qquad - \frac{\kappa_\edge}{|\edge|} \int_{\partial \edge} \lambda_h
\Normal_\edge \ds ,\\
  u_h |_\edge^{} & = \frac{1}{B} \left[ A \tau_\edge \int_{\partial \edge}
\lambda_h \ds + d |E| \int_{\partial \edge} \lambda_h (\vec x - \vec x_\edge)
\cdot \Normal_\edge \ds + A \int_\edge f \dbx \right].
 \end{align*}
 This can be verified by plugging these representations into
\eqref{EQ:hdg_scheme}. Applying \eqref{eq:identity}, we derive that the matrix
entries in an analogue of \eqref{eq:condensed_systemRT} are given by
 \[ a_{\node,\node'} = \sum_{\substack{\edge \in \setEdge\\\partial \edge
\supset \node,\node'}} \left( \left[ \frac{\kappa_\edge(
\Normal_\edge|_\node^{} \cdot \Normal_\edge|_{\node'}^{} )}{|\edge|} - \frac{\tau_\edge^2 A}{B} \right] |\node| |\node'| - \tau_\edge \frac{d^2 |\edge|^2}{(d+1)B} \left[ |\node| + |\node'| - \frac{|\partial \edge|}{d+1} \right] \right) \]
and hence we can ensure that \( a_{\node,\node'} \le 0 \) for $\node\neq\node'$
if \(\tau_\edge > 0\) is sufficiently large.
\end{remark}

\subsection{HDG methods on rectangular hypergraphs for $\tau_E>0$,
$U(\edge)=\polynomialsp_0(\edge)$, and $M(\node)=\polynomialsp_0(\node)$}
\label{sec:p0_1_p0}

Let us start with $\vec Q(\edge)=\polynomials_1(\edge)^2$. Assuming that
$\kappa$ takes a constant value $\kappa_\edge>0$ on each $\edge\in\setEdge$, it follows from Remark~\ref{rem:coefficients} that the solution of the HDG
method \eqref{EQ:hdg_scheme} is given by \eqref{eq:uh_qh_general} with
$a_1=a_2=b=a^1_2=b^1=a^2_1=b^2=0$, with $\vec q_0$ given by \eqref{eq:q0} and
with $u_0$, $a^1_1$ and $a^2_2$ satisfying \eqref{eq:u0} and
\begin{equation}\label{eq:aii}
  a^i_i  = \frac{12\kappa_\edge}{h_i^2} u_0 
  -\frac{6\kappa_\edge}{|\edge|\,h_i}\int_{\node_i^+\cup\node_i^-}\lambda_h\ds,
  \qquad i=1,2.
\end{equation}
Thus, for any $\edge\in\setEdge$, one has 
$\vec q_h|_\edge^{}\in\polynomialsp_1(\edge)^2$ and
even $\vec q_h|_\edge^{}\in\polynomialsRT_0(\edge)$ (rectangular version). Therefore, the HDG
method with $\vec Q(\edge)=\polynomials_1(\edge)^2$ is equivalent to the HDG
method using $\vec Q(\edge)=\polynomialsp_1(\edge)^2$ or 
$\vec Q(\edge)=\polynomialsRT_0(\edge)$.

Denoting
\begin{equation*}
  A=12\kappa_\edge\left(h_1\int_{\node_1^+\cup\node_1^-}\lambda_h\ds
  -h_2\int_{\node_2^+\cup\node_2^-}\lambda_h\ds\right),\qquad
  B=12\kappa_\edge\int_\edge f\dbx,
\end{equation*}
and
\begin{equation*}
  C=\tau_\edge|\edge|\,|\partial\edge|+12\kappa_\edge(h_1^2+h_2^2),
\end{equation*}
the solution of \eqref{eq:u0} and \eqref{eq:aii} is given by
\begin{align}
   a^1_1&=-\frac{\tau_\edge h_2+6\kappa_\edge}{|\edge|C}A
   +\frac{h_2^2}{|\edge|C}B,\qquad
   a^2_2=\frac{\tau_\edge h_1+6\kappa_\edge}{|\edge|C}A
   +\frac{h_1^2}{|\edge|C}B,\label{eq:a11_a22}\\
   u_0&= \frac1{|\partial\edge|}\int_{\partial\edge} \lambda_h\ds +
   \frac1{\tau_\edge|\partial\edge|}\int_\edge f \dbx
   -\frac{\tau_E(h_1-h_2)A+(h_1^2+h_2^2)B}{\tau_\edge|\partial\edge|C}.
   \label{eq:u0_def}
\end{align}
To see this, we first rewrite the formula for $u^0$ in the form
\begin{equation}\label{eq:u0_pos_formula}
   \frac{C}{|E|}u_0=\tau_\edge\int_{\partial\edge} \lambda_h\ds
   +\frac{6\kappa_\edge}{h_1}\int_{\node_1^+\cup\node_1^-}\lambda_h\ds
   +\frac{6\kappa_\edge}{h_2}\int_{\node_2^+\cup\node_2^-}\lambda_h\ds
   +\int_\edge f \dbx.
\end{equation}
Since
\begin{equation*}
   \frac1{h_1}\left(\tau_\edge+\frac{6\kappa_\edge}{h_1}\right)
  +\frac1{h_2}\left(\tau_\edge+\frac{6\kappa_\edge}{h_2}\right)
  =\frac{C}{2|\edge|^2},
\end{equation*}
one obtains, for any $i\neq j$, $i,j\in\{1,2\}$,
\begin{equation*}
   12\kappa_\edge u_0=
   (-1)^i\,h_i^2\frac{\tau_\edge h_j+6\kappa_\edge}{|\edge|C}A
   +\frac{6\kappa_\edge}{|\edge|}h_i
   \int_{\node_i^+\cup\node_i^-}\lambda_h\ds+\frac{|\edge|}C B.
\end{equation*}
Using this relation and \eqref{eq:a11_a22}, the validity of \eqref{eq:u0} and
\eqref{eq:aii} easily follows.

It remains to investigate the consequences of the coupling conditions
\eqref{EQ:hdg_global}. Without loss of generality, we can consider the edge 
$\node_1^+$ of the rectangle $\edge$. Then we have
\begin{equation*}
  (\vec q_h|_E^{}\cdot\Normal_E + \tau_E(u_h|_E^{}-\lambda_h))|_{N_1^+}^{}
  =\vec q_0\cdot\vec e_1+a_1^1\frac{h_1}2+\tau_E u_0-\tau_E\lambda_h|_{N_1^+}^{}
\end{equation*}
and since
\begin{equation*}
   \frac{\tau_\edge h_2+6\kappa_\edge}{|\edge|C}\frac{h_1}2
   +\frac{\tau_E(h_1-h_2)}{|\partial\edge|C}
   =\frac{2\tau_\edge|\edge|+3\kappa_E|\partial\edge|}
   {|\edge|\,|\partial\edge|C}h_1,
\end{equation*}
it follows from \eqref{eq:a11_a22} and \eqref{eq:u0_def} that
\begin{align*}
  &(\vec q_h|_E^{}\cdot\Normal_E + \tau_E(u_h|_E^{}-\lambda_h))|_{N_1^+}^{}
  =- \frac{\kappa_\edge}{|\edge|} \int_{N_1^+} \lambda_h\ds
   -\frac{\tau_\edge(2h_1+h_2)}{|\partial\edge|}\lambda_h|_{N_1^+}^{}\\
& +\frac{\tau_\edge}{|\partial\edge|}\int_{N_2^+\cup N_2^-} \lambda_h\ds 
   -\frac{2\tau_\edge|\edge|+3\kappa_E|\partial\edge|}
   {|\edge|\,|\partial\edge|C}h_1A\\
&  + \left(\frac{\kappa_\edge}{|\edge|} 
  +\frac{\tau_\edge}{|\partial\edge|}\right)\int_{N_1^-} \lambda_h\ds 
   +\frac{\tau_\edge|\edge|+6\kappa_\edge h_2}{C}\int_\edge f \dbx.
\end{align*}
To prove the nonnegativity of $\lambda_h$, it is needed that, in the above
expression, the sums of the coefficients at $\lambda_h|_{N_1^-}^{}$ and
$\lambda_h|_{N_2^\pm}^{}$ are nonnegative. This is obvious for
$\lambda_h|_{N_2^\pm}^{}$. The terms containing $\lambda_h|_{N_1^-}^{}$ sum to
\begin{equation}\label{eq:coeff_at_l1m}
   \left(\frac{\kappa_\edge}{|\edge|}+\frac{\tau_\edge}{|\partial\edge|}
   - 12\kappa_\edge h_1^2\,\frac{2\tau_\edge|\edge|+3\kappa_E|\partial\edge|}
   {|\edge|\,|\partial\edge|C}\right)\int_{N_1^-} \lambda_h\ds.
\end{equation}
Denoting by $\varrho$ the aspect ration of $\edge$ and setting
\begin{equation*}
   s=\frac{\tau_\edge\,|\edge|}{\kappa_\edge\,|\partial\edge|},
\end{equation*}
the nonnegativity of the coefficient in \eqref{eq:coeff_at_l1m} is guaranteed
if
\begin{equation}\label{eq:quadr_ineq}
   (s+1)(s|\partial\edge|^2+12(h_1^2+h_2^2))\ge12|\edge|\varrho(2s+3).
\end{equation}
Note that this quadratic inequality is not satisfied for $s=0$. The smallest
positive solution $s_0$ of \eqref{eq:quadr_ineq} depends only on the aspect
ratio $\varrho$. Thus, the coefficient in \eqref{eq:coeff_at_l1m} is
nonnegative if $\tau_E\ge C(\varrho)\kappa_\edge/\operatorname{diam}(\edge)$
for some positive function $C(\varrho)$ of $\varrho$. If $\edge$ is a square,
i.e., $h_1=h_2=:h$, then the inequality \eqref{eq:quadr_ineq} reduces to
$2(s+1)\ge3$ and hence the coefficient in \eqref{eq:coeff_at_l1m} is
nonnegative if $\tau_\edge$ satisfies \eqref{eq:q0_p0_p0_cond_square}.

The above considerations show that, under the condition \eqref{eq:quadr_ineq},
the constraint \eqref{EQ:hdg_global} can be written in the form of the linear
system \eqref{eq:condensed_system} with a different but nonnegative right-hand
side and with $a_{\node\node'}\le0$ for $\node\neq\node'$. Setting
$\lambda_h=1$ on $\Sigma_h$ and $f=0$, we obtain $(\vec q_h|_E^{}\cdot\Normal_E
+ \tau_E(u_h|_E^{}-\lambda_h))|_{\partial\edge}^{}=0$ for any
$\edge\in\setEdge$ since the above-used formulas give $\vec q_0=\vec0$, $A=B=0$, and $u_0=1$. This proves that the row
sums of the matrix in \eqref{eq:condensed_system} vanish. Now, one concludes
that $\lambda_h\ge0$ in the same way as in Section~\ref{SEC:p0} and $u_h\ge0$
follows from \eqref{eq:u0_pos_formula}.

\begin{remark}
For $\tau_\edge=0$, it follows from \eqref{eq:a11_a22} and
\eqref{eq:u0_pos_formula} that
\begin{align*}
 u_h|_\edge^{}(x_1,x_2) &=\frac{|E|}{(h_1^2+h_2^2)}\left(
    \frac1{2h_1}\int_{\node_1^+\cup\node_1^-}\lambda_h\ds
   +\frac1{2h_2}\int_{\node_2^+\cup\node_2^-}\lambda_h\ds
   +\frac1{12\kappa_\edge}\int_\edge f \dbx\right),\\
 \vec q_h|_\edge^{}(x_1,x_2) & = \vec q_0 + 
 \frac1{2|\edge|(h_1^2+h_2^2)}A
 \begin{pmatrix}
   -x_1\\[1mm]
    x_2
 \end{pmatrix}
 +\frac1{|\edge|(h_1^2+h_2^2)}\int_\edge f\dbx
 \begin{pmatrix}
   h_2^2 x_1\\[1mm]
   h_1^2 x_2
 \end{pmatrix}.
\end{align*}
These formulas again represent a solution of \eqref{EQ:hdg_primary} and
\eqref{EQ:hdg_flux}, although \eqref{eq:a11_a22} and \eqref{eq:u0_pos_formula}
were derived for $\tau_\edge>0$. This is not surprising since, for 
$\vec Q(\edge)=\polynomialsRT_0(\edge)$, the HDG method \eqref{EQ:hdg_scheme}
is well-posed also for $\tau_\edge=0$. However, the coefficient in
\eqref{eq:coeff_at_l1m} is negative if $h_2<\sqrt2 h_1$ and hence the matrix of
\eqref{eq:condensed_system} is not of nonnegative type. Hence the positivity
preservation of the HDG method cannot be expected, see the counterexample in
Section~\ref{sec:counter_RT}.
\end{remark}

\section{Counterexamples in two dimensions}\label{SEC:counter}
%
In this section we consider the two-dimensional case and present several 
counterexamples demonstrating that, for many choices of the spaces $U(\edge)$,
$\vec Q(\edge)$, and $M(\node)$, the HDG method \eqref{EQ:hdg_scheme} is not
positivity preserving in general. It will be always assumed that
\begin{equation}\label{eq:counterex_assumptions}
   \kappa=1,\qquad f=0,\qquad
   \mbox{$\Gamma_\textup D$ consists of all boundary hypernodes.}
\end{equation}
We start with the natural choice 
$U(\edge)=\polynomialsp_1(\edge)$, $\vec Q(\edge)=\polynomialsp_1(\edge)^2$, 
$M(\node)=\polynomialsp_1(\node)$.

\subsection{Counterexample for $\polynomialsp_1(\setEdge)\times
\polynomialsp_1(\setEdge)^2\times\polynomialsp_1(\setNode)$}
\label{sec:counterex_P1_P1_P1}
Let us first consider the case when $\setEdge$ consists of only one
hyperedge $E$ being a rectangle with edges parallel to the coordinate axes.
Assume \eqref{eq:counterex_assumptions}, i.e., $\Gamma_\textup D=\partial E$,
and let
\begin{equation}\label{eq:rel8}
 g_\textup D (x,y) = (a(x-x_E) + b)(c(y-y_E) + d) \ge 0 \qquad 
 \text{ for all } (x,y) \in \overline\edge,
\end{equation}
where $(x_E,y_E)$ is the barycenter of $E$ and $a$, $b$, $c$, $d$ are real
numbers. Consider the HDG method \eqref{EQ:hdg_scheme} with spaces defined using
$U(\edge)=\polynomialsp_1(\edge)$, $\vec Q(\edge)=\polynomialsp_1(\edge)^2$,
and $M(\node)=\polynomialsp_1(\node)$. Then we can use
$g_{{\textup D},h}=g_\textup D|_{\partial\edge}^{}$. In this case, the HDG
method \eqref{EQ:hdg_scheme} reduces to the equations \eqref{EQ:hdg_primary}
and \eqref{EQ:hdg_flux} since $\lambda_h$ is fully determined by the Dirichlet
boundary condition, i.e., $\lambda_h = g_\textup D|_{\partial \edge}^{}$. The
solution of the HDG method is given by
\begin{equation*}
  u_h(x,y) = ad(x-x_E) + bc(y-y_E) + bd, \qquad 
  \vec q_h = - \nabla g_\textup D.
\end{equation*}
To see this, note first that $g_\textup D-u_h=ac(x-x_E)(y-y_E)$, which
implies that
\begin{equation}\label{eq:rel6}
  \int_\edge u_h \dbx = \int_\edge g_\textup D \dbx,
\end{equation}
and
\begin{equation}\label{eq:rel7}
  \int_{\partial \edge} u_h v_h \ds = \int_{\partial \edge} g_\textup D v_h \ds
  \quad\forall v_h\in P_1(E).
\end{equation}
Using \eqref{eq:rel6}, we obtain
\begin{multline*}
  \int_\edge\vec q_h\cdot\vec p_h\dbx - \int_\edge u_h \div \vec p_h \dbx =
 -\int_\edge\nabla g_\textup D\cdot\vec p_h + g_\textup D \div \vec p_h\dbx\\
  = - \int_\edge \div ( g_\textup D \vec p_h) \dbx 
  = - \int_{\partial \edge} \lambda_h \vec p_h \cdot \Normal_E \ds,
\end{multline*}
for all $\vec p_h \in \vec Q_h$, proving the validity of
\eqref{EQ:hdg_primary}. Moreover, since $\nabla\cdot\vec q_h = - \Delta
g_\textup D = 0$, it follows from \eqref{eq:rel7} that \eqref{EQ:hdg_flux} is
satisfied as well. However, denoting by $h_x$ and $h_y$ the width and the
height of $E$, respectively, and choosing $a=c=2$, $b=h_x$ and $d=h_y$, one has
\eqref{eq:rel8} but $u_h(x_E-h_x/2,y_E-h_y/2) = -h_xh_y$, which violates the
bound $u_h \ge 0$, so that the HDG method is not positivity preserving.

The above counterexample can be extended to sets $\setEdge$ consisting of
several rectangles. As an example, let us consider two unit squares $E_1$,
$E_2$ sharing a common vertical edge $\node_{12}$ so that
\begin{equation*}
   x_{E_2}=x_{E_1}+1,\quad y_{E_2}=y_{E_1},\quad 
   \partial E_1\cap\partial E_2 = N_{12},
\end{equation*}
see Fig.~\ref{fig:two_squares}.
\begin{figure}[t]
\centering
\begin{tikzpicture}
   \draw[line width=0.4mm]
      (0,0) -- (5,0) -- (5,2.5) -- (0,2.5) -- cycle;
   \draw[line width=0.4mm, color=blue]
      (2.5,0) -- (2.5,2.5);
   \node {} (0.5,2.3) node[below] {$E_1$};
   \node {} (4.55,2.3) node[below] {$E_2$};
   \node {} (2.95,2.3) node[below] {\textcolor{blue}{$N_{12}$}};
   \node {} (1.25,1.25) node[below] {\small $(x_{E_1},y_{E_1})$};
   \node {} (3.75,1.25) node[below] {\small $(x_{E_2},y_{E_2})$};
   \fill[black] (1.25,1.25) circle (2pt);
   \fill[black] (3.75,1.25) circle (2pt);
   \pgfusepath{fill}
\end{tikzpicture}
\caption{Hypergraph consisting of two squares $E_1$, $E_2$.}
\label{fig:two_squares}
\end{figure}
Thus, $\setEdge=\{E_1,E_2\}$ and $\setNode$ consists of seven hypernodes. The
Dirichlet boundary $\Gamma_\textup D$ is again assumed to consist of all
boundary hypernodes so that the only hypernode which is not contained in
$\Gamma_\textup D$ is $\node_{12}$. Let us set
\begin{equation*}
  g_\textup D (x,y)=\left\{
    \begin{array}{cl}
       (2(x-x_{E_1}) + 1)(2(y-y_{E_1}) + 1)\quad
       &\text{ for all } (x,y) \in \overline{\edge_1},\\[1mm]
       (2(x-x_{E_2}) + 3)(2(y-y_{E_2}) + 1)\quad
       &\text{ for all } (x,y) \in \overline{\edge_2}\setminus\node_{12}.\\
    \end{array}\right.
\end{equation*}
Then $g_\textup D\in C(\overline{E_1\cup E_2})$ and $g_\textup D\ge0$. The
definition of $g_\textup D$ makes it possible to set
\begin{equation*}
    g_{{\textup D},h}=g_\textup D|_{\Gamma_\textup D}^{},\qquad 
    \lambda_h=g_\textup D|_{\Sigma_h}^{},\qquad
    \vec q_h|_{E_i}^{} = - \nabla g_\textup D|_{E_i}^{},\quad i=1,2.
\end{equation*}
Furthermore, we set
\begin{equation*}
  u_h(x,y)=\left\{
    \begin{array}{cl}
       2(x-x_{E_1}) + 2(y-y_{E_1}) + 1\quad
       &\text{ for all } (x,y) \in\edge_1,\\[1mm]
       2(x-x_{E_2}) + 6(y-y_{E_2}) + 3\quad
       &\text{ for all } (x,y) \in\edge_2.\\
    \end{array}\right.
\end{equation*}
Then, as it was shown above, the functions $u_h$, $\vec q_h$, $\lambda_h$
satisfy \eqref{EQ:hdg_primary} and \eqref{EQ:hdg_flux}. Moreover, if
$\tau_{E_1}=\tau_{E_2}=\tau$, we obtain
\begin{equation*}
  (\vec q_h|_{E_2}^{}\cdot\Normal_{E_2} 
  + \tau_{E_2}(u_h|_{E_2}^{}-\lambda_h))|_{\node_{12}}^{}
  =(4+2\tau)(y-y_{E_1})+2
  =-(\vec q_h|_{E_1}^{}\cdot\Normal_{E_1} 
  + \tau_{E_1}(u_h|_{E_1}^{}-\lambda_h))|_{\node_{12}}^{}
\end{equation*}
so that \eqref{EQ:hdg_global_equiv} and hence also \eqref{EQ:hdg_global} hold,
too. Consequently, the functions $u_h$, $\vec q_h$, $\lambda_h$ are the unique
solution of the HDG method \eqref{EQ:hdg_scheme}. Since 
$u_h(x_{E_1}-1/2,y_{E_1}-1/2) = -1$, the positivity preservation is again
violated.

\subsection{Counterexample for $\polynomials_1(\setEdge)\times
\polynomialsp_1(\setEdge)^2\times\polynomialsp_1(\setNode)$}
\label{sec:counterex_Q1_P1_P1}

Like in the previous counterexample, we consider a hypergraph consisting of the
two unit squares depicted in Fig.~\ref{fig:two_squares} and assume
\eqref{eq:counterex_assumptions}. Let us set
\begin{equation*}
  g_\textup D (x,y)=\left\{
    \begin{array}{cl}
       0\quad &\text{ for } x\le(x_{E_1}+x_{E_2})/2,\\[1mm]
       x-x_{E_2} + \frac12\quad
       &\text{ for } x>(x_{E_1}+x_{E_2})/2.\\
    \end{array}\right.
\end{equation*}
Then $g_\textup D$ is continuous and nonnegative. The values of $g_\textup D$
at the vertices of the hypergraph are shown in Fig.~\ref{fig:values} (left). We
\begin{figure}[t]
\centering
\begin{tikzpicture}
   \draw[line width=0.4mm, color=red]
      (0,0) -- (5,0) -- (5,2.5) -- (0,2.5) -- cycle;
   \draw[line width=0.6mm, color=blue]
      (2.5,0.15) -- (2.5,2.35);
   \draw[line width=0.4mm]
      (7,0) -- (12,0) -- (12,2.5) -- (7,2.5) -- cycle
      (9.5,0) -- (9.5,2.5);
   \node {} (2.5,1.9) node[right] {\textcolor{blue}{$\lambda_h=\lambda$}};
   \node {} (3.75,0) node[below] {\textcolor{red}{$\lambda_h=g_\textup D$}};
   \node {} (12,1.25) node[right] {$u_h$};
   \node {} (1.25,1.25) node[below] {\small $(x_{E_1},y_{E_1})$};
   \node {} (3.75,1.25) node[below] {\small $(x_{E_2},y_{E_2})$};
   \node {} (8.25,1.25) node[below] {\small $(x_{E_1},y_{E_1})$};
   \node {} (10.75,1.25) node[below] {\small $(x_{E_2},y_{E_2})$};
   \node {} (0,2.5) node[above left] {$0$};
   \node {} (2.5,2.5) node[above] {$0$};
   \node {} (5,2.5) node[above right] {$1$};
   \node {} (0,0) node[below left] {$0$};
   \node {} (2.5,0) node[below] {$0$};
   \node {} (5,0) node[below right] {$1$};
   \node {} (7,2.5) node[above left] {$-\frac\lambda8$};
   \node {} (9.5,2.5) node[above] {$\frac{5\lambda}8$};
   \node {} (12,2.5) node[above right] {$1-\frac\lambda8$};
   \node {} (7,0) node[below left] {$-\frac\lambda8$};
   \node {} (9.5,0) node[below] {$\frac{5\lambda}8$};
   \node {} (12,0) node[below right] {$1-\frac\lambda8$};
   \fill[black] (1.25,1.25) circle (2pt);
   \fill[black] (3.75,1.25) circle (2pt);
   \fill[black] (8.25,1.25) circle (2pt);
   \fill[black] (10.75,1.25) circle (2pt);
   \fill[black] (0,0) circle (2pt);
   \fill[black] (0,2.5) circle (2pt);
   \fill[black] (2.5,0) circle (2pt);
   \fill[black] (2.5,2.5) circle (2pt);
   \fill[black] (5,0) circle (2pt);
   \fill[black] (5,2.5) circle (2pt);
   \fill[black] (7,0) circle (2pt);
   \fill[black] (7,2.5) circle (2pt);
   \fill[black] (9.5,0) circle (2pt);
   \fill[black] (9.5,2.5) circle (2pt);
   \fill[black] (12,0) circle (2pt);
   \fill[black] (12,2.5) circle (2pt);
   \pgfusepath{fill}
\end{tikzpicture}
\caption{Counterexample for $\polynomials_1(\setEdge)\times
\polynomialsp_1(\setEdge)^2\times\polynomialsp_1(\setNode)$: values of
$\lambda_h$ (left) and $u_h$ (right).}
\label{fig:values}
\end{figure}
will consider the HDG method \eqref{EQ:hdg_scheme} with spaces defined using
$U(\edge)=\polynomials_1(\edge)$, $\vec Q(\edge)=\polynomialsp_1(\edge)^2$,
and $M(\node)=\polynomialsp_1(\node)$. Then we can again set
$g_{{\textup D},h}=g_\textup D|_{\Gamma_\textup D}^{}$. It can be expected that
the solution of the HDG method \eqref{EQ:hdg_scheme} will be constant in the
$y$-direction. Thus, in particular, $\lambda_h|_{\node_{12}}^{}=const=:\lambda$
so that $\lambda_h$ attains the values depicted in Fig.~\ref{fig:values}
(left). If $\lambda\neq0$, then $\lambda_h$ is discontinuous at the endpoints
of $\node_{12}$. We denote
\begin{equation*}
  u_h(x,y)=\left\{
    \begin{array}{cl}
       \frac{3\lambda}4(x-x_{E_1}) + \frac\lambda4\quad
       &\text{ for all } (x,y) \in\edge_1,\\[2mm]
       \left(1-\frac{3\lambda}4\right)(x-x_{E_2}) + \frac\lambda4+\frac12\quad
       &\text{ for all } (x,y) \in\edge_2.\\
    \end{array}\right.
\end{equation*}
The definition of this function is clearer from Fig.~\ref{fig:values} (right)
showing the values of $u_h$ at the vertices of the hypergraph. Note that $u_h$
is continuous across $\node_{12}$. Using the values shown in
Fig.~\ref{fig:values}, it is easy to verify that
\begin{equation}\label{eq:L2_proj_bndry}
  \int_{\partial \Edge_i} u_h v \ds = \int_{\partial \Edge_i} \lambda_h v \ds
  \quad\forall v\in Q_1(\Edge_i),\,\,i=1,2.
\end{equation}
Finally, we define $\vec q_h\in\vec Q_h$ by
\begin{equation*}
   \vec q_h|_{\edge_1}^{}(x,y)=\begin{pmatrix} 
   -\lambda-3\lambda(x-x_{E_1}) \\ 3\lambda(y-y_{E_1})\end{pmatrix},\qquad
   \vec q_h|_{\edge_2}^{}(x,y)=\begin{pmatrix} 
   \lambda-1-3\lambda(x-x_{E_2}) \\ 3\lambda(y-y_{E_2})\end{pmatrix}.
\end{equation*}
It can be verified that then \eqref{EQ:hdg_primary} is satisfied. Since
$\nabla\cdot\vec q_h|_{\edge_i}^{}=0$ for $i=1,2$, the validity of
\eqref{EQ:hdg_flux} follows from \eqref{eq:L2_proj_bndry}. It is easy to see
that
\begin{equation*}
  (\vec q_h|_{E_1}^{}\cdot\Normal_{E_1}
  +\tau_{E_1}(u_h-\lambda_h))|_{\node_{12}}^{}
  +(\vec q_h|_{E_2}^{}\cdot\Normal_{E_2} 
  + \tau_{E_2}(u_h-\lambda_h))|_{\node_{12}}^{}
  =1-5\lambda-(\tau_{E_1}+\tau_{E_2})\frac{3\lambda}8
\end{equation*}
and hence \eqref{EQ:hdg_global} holds if and only if
\begin{equation*}
  \lambda=\frac8{40+3(\tau_{E_1}+\tau_{E_2})}.
\end{equation*}
Then it is obvious from Fig.~\ref{fig:values} (right) that $u_h$ attains
negative values, which means that the HDG method is not positivity preserving.

\subsection{Counterexample for $\polynomialsp_1(\setEdge)\times
\polynomials_1(\setEdge)^2\times\polynomialsp_1(\setNode)$}
\label{sec:counterex_P1_Q1_P1}

In this section, we construct a counterexample for $\setEdge$ consisting of
only one hyperedge being the unit square $E=(-1/2,1/2)^2$. Assume
\eqref{eq:counterex_assumptions}, i.e., $\Gamma_\textup D=\partial E$, and let
\begin{equation*}
 g_\textup D (x,y) = \left(x+\frac12\right)\left(y+\frac12\right) \ge 0 \qquad 
 \text{ for all } (x,y) \in \overline\edge.
\end{equation*}
Consider the HDG method \eqref{EQ:hdg_scheme} with spaces defined using
$U(\edge)=\polynomialsp_1(\edge)$, $\vec Q(\edge)=\polynomials_1(\edge)^2$,
and $M(\node)=\polynomialsp_1(\node)$. Then we can again use
$g_{{\textup D},h}=g_\textup D|_{\partial\edge}^{}$. As already mentioned in
Section~\ref{sec:counterex_P1_P1_P1}, the HDG method \eqref{EQ:hdg_scheme}
reduces to the equations \eqref{EQ:hdg_primary} and \eqref{EQ:hdg_flux} in this
case. It can be verified that the solution of the HDG method is given by
\begin{equation*}
  u_h(x,y) = \frac{x+y}2+\frac14, \qquad 
  \vec q_h(x,y) = -\begin{pmatrix}y+\frac12\\[2mm]
  x+\frac12\end{pmatrix}.
\end{equation*}
Since $u_h(-1/2,-1/2)=-1/4$, this demonstrates that the HDG method is not
positivity preserving.

\subsection{Counterexample for $\polynomials_1(\setEdge)\times
\polynomials_1(\setEdge)^2\times\polynomialsp_1(\setNode)$}
\label{sec:counterex_Q1_Q1_P1}

Like in the counterexamples~\ref{sec:counterex_P1_P1_P1}
and~\ref{sec:counterex_Q1_P1_P1}, we consider a hypergraph consisting of the
two unit squares depicted in Fig.~\ref{fig:two_squares} and assume
\eqref{eq:counterex_assumptions}. Let us set
\begin{equation}\label{eq:gD_Q1_Q1_P1}
  g_\textup D (x,y)=\left\{
    \begin{array}{cl}
       -(x-x_{E_1} - \frac12)(y-y_{E_1} + \frac12)\quad
       &\text{ for } x\le(x_{E_1}+x_{E_2})/2,\\[1mm]
       0\quad &\text{ for } x>(x_{E_1}+x_{E_2})/2.
    \end{array}\right.
\end{equation}
Then $g_\textup D$ is continuous and nonnegative in
$\overline{\edge_1\cup\edge_2}$. The values of $g_\textup D$ at the vertices of
the hypergraph are shown in Fig.~\ref{fig:values_q1_q1} (left). We
\begin{figure}[t]
\centering
\begin{tikzpicture}
   \draw[line width=0.4mm, color=red]
      (0,0) -- (5,0) -- (5,2.5) -- (0,2.5) -- cycle;
   \draw[line width=0.6mm, color=blue]
      (2.5,0.15) -- (2.5,2.35);
   \draw[line width=0.4mm]
      (7.5,0) -- (12.5,0) -- (12.5,2.5) -- (7.5,2.5) -- cycle
      (10,0) -- (10,2.5);
   \node {} (2.5,2.2) node[right] {\textcolor{blue}{$\lambda_h=\lambda_1$}};
   \node {} (2.5,0.3) node[right] {\textcolor{blue}{$\lambda_h=\lambda_2$}};
   \node {} (3.75,0) node[below] {\textcolor{red}{$\lambda_h=g_\textup D$}};
   \node {} (12.5,1.25) node[right] {$u_h$};
   \node {} (1.25,1.25) node[below] {\small $(x_{E_1},y_{E_1})$};
   \node {} (3.75,1.25) node[below] {\small $(x_{E_2},y_{E_2})$};
   \node {} (8.75,1.25) node[below] {\small $(x_{E_1},y_{E_1})$};
   \node {} (11.25,1.25) node[below] {\small $(x_{E_2},y_{E_2})$};
   \node {} (0,2.5) node[above left] {$1$};
   \node {} (2.5,2.5) node[above] {$0$};
   \node {} (5,2.5) node[above right] {$0$};
   \node {} (0,0) node[below left] {$0$};
   \node {} (2.5,0) node[below] {$0$};
   \node {} (5,0) node[below right] {$0$};
   \node {} (7.6,2.5) node[above left] {$1+\alpha\lambda_1$};
   \node {} (10,2.5) node[above] {$\frac12\lambda_1-\alpha\lambda_2$};
   \node {} (12.45,2.5) node[above right] {$\alpha\lambda_1$};
   \node {} (7.6,0) node[below left] {$\alpha\lambda_2$};
   \node {} (10,0) node[below] {$\frac12\lambda_2-\alpha\lambda_1$};
   \node {} (12.45,0) node[below right] {$\alpha\lambda_2$};
   \fill[black] (1.25,1.25) circle (2pt);
   \fill[black] (3.75,1.25) circle (2pt);
   \fill[black] (8.75,1.25) circle (2pt);
   \fill[black] (11.25,1.25) circle (2pt);
   \fill[black] (0,0) circle (2pt);
   \fill[black] (0,2.5) circle (2pt);
   \fill[black] (2.5,0) circle (2pt);
   \fill[black] (2.5,2.5) circle (2pt);
   \fill[black] (5,0) circle (2pt);
   \fill[black] (5,2.5) circle (2pt);
   \fill[black] (7.5,0) circle (2pt);
   \fill[black] (7.5,2.5) circle (2pt);
   \fill[black] (10,0) circle (2pt);
   \fill[black] (10,2.5) circle (2pt);
   \fill[black] (12.5,0) circle (2pt);
   \fill[black] (12.5,2.5) circle (2pt);
   \pgfusepath{fill}
\end{tikzpicture}
\caption{Counterexample for $\polynomials_1(\setEdge)\times
\polynomials_1(\setEdge)^2\times\polynomialsp_1(\setNode)$: values of
$\lambda_h$ (left) and $u_h$ (right).}
\label{fig:values_q1_q1}
\end{figure}
will consider the HDG method \eqref{EQ:hdg_scheme} with spaces defined using
$U(\edge)=\polynomials_1(\edge)$, $\vec Q(\edge)=\polynomials_1(\edge)^2$,
and $M(\node)=\polynomialsp_1(\node)$. Then we can again set
$g_{{\textup D},h}=g_\textup D|_{\Gamma_\textup D}^{}$. The function
$g_{{\textup D},h}$ defines $\lambda_h$ at all hypernodes $\node\in\setNode$
except $\node_{12}$ where $\lambda_h$ is determined by its values $\lambda_1$,
$\lambda_2$ at the endpoints of $\node_{12}$, see~Fig.~\ref{fig:values_q1_q1}
(left). Precisely,
\begin{equation*}
   \lambda_h|_{\node_{12}}^{}(x,y)
   =\frac{\lambda_1+\lambda_2}2+(\lambda_1-\lambda_2)(y-y_{\edge_1}).
\end{equation*}
For simplicity, we consider the HDG method \eqref{EQ:hdg_scheme} with
$\tau_{\edge_1}=\tau_{\edge_2}=:\tau$ and we denote
\begin{equation*}
   \alpha=\frac{3-\tau}{8\tau+12},\qquad\beta=\frac{9\tau}{2\tau+3}.
\end{equation*}
Using this notation, we define a continuous function $u_h\in U_h\cap
C(\overline{\edge_1\cup\edge_2})$ such that, at the vertices of the hypergraph,
$u_h$ attains the values depicted in Fig.~\ref{fig:values_q1_q1} (right).
Furthermore, we define a function $\vec q_h\in\vec Q_h$ by
\begin{align*}
   \vec q_h|_{\edge_1}^{}(x,y)&=\begin{pmatrix*}[r]
   \frac12(1-\lambda_1-\lambda_2)-\frac32(\lambda_1+\lambda_2)(x-x_{E_1})
   +(1-\lambda_1+\lambda_2)(y-y_{E_1})\hspace*{12mm}\\
   -\beta(\lambda_1-\lambda_2)(x-x_{E_1})(y-y_{E_1})\\[2mm]
   -\frac12+x-x_{E_1}+\frac32(\lambda_1+\lambda_2)(y-y_{E_1})
   +\beta(\lambda_1+\lambda_2)(x-x_{E_1})(y-y_{E_1})
   \end{pmatrix*},\\[2mm]
   \vec q_h|_{\edge_2}^{}(x,y)&=\begin{pmatrix*}[r]
   \frac12(\lambda_1+\lambda_2)-\frac32(\lambda_1+\lambda_2)(x-x_{E_2})
   +(\lambda_1-\lambda_2)(y-y_{E_2})\hspace*{19mm}\\
   -\beta(\lambda_1-\lambda_2)(x-x_{E_2})(y-y_{E_2})\\[2mm]
   \frac32(\lambda_1+\lambda_2)(y-y_{E_2})
   -\beta(\lambda_1+\lambda_2)(x-x_{E_2})(y-y_{E_2})
   \end{pmatrix*}.
\end{align*}
It can be verified that then both \eqref{EQ:hdg_primary} and
\eqref{EQ:hdg_flux} are satisfied. Moreover, we obtain
\begin{align*}
  &(\vec q_h|_{E_1}^{}\cdot\Normal_{E_1}
  +\tau(u_h-\lambda_h))|_{\node_{12}}^{}
  +(\vec q_h|_{E_2}^{}\cdot\Normal_{E_2} 
  + \tau(u_h-\lambda_h))|_{\node_{12}}^{}\\
  &=\frac12-\frac52(\lambda_1+\lambda_2)
  +\left(1-(2+\beta)(\lambda_1-\lambda_2)\right)(y-y_{E_1})\\
  &\phantom{=}+\tau\left(-\left(\frac12+\alpha\right)(\lambda_1+\lambda_2)
  +(2\alpha-1)(\lambda_1-\lambda_2)(y-y_{E_1})\right)
\end{align*}
and hence \eqref{EQ:hdg_global} holds if and only if
\begin{align*}
  &1-5(\lambda_1+\lambda_2)-\tau(1+2\alpha)(\lambda_1+\lambda_2)=0,\\
  &1-(2+\beta)(\lambda_1-\lambda_2)-\tau(1-2\alpha)(\lambda_1-\lambda_2)=0,
\end{align*}
which gives
\begin{equation*}
  \lambda_1+\lambda_2=\frac{4\tau+6}{3\tau^2+29\tau+30},\qquad
  \lambda_1-\lambda_2=\frac{4\tau+6}{5\tau^2+29\tau+12}.
\end{equation*}
Thus, $\lambda_1>0$ but $\lambda_2\ge0$ if and only if
$5\tau^2+29\tau+12\ge3\tau^2+29\tau+30$, which is satisfied only for $\tau\ge3$.
Then it follows from Fig.~\ref{fig:values_q1_q1} (right) that $u_h$ is
nonnegative if and only if $\tau=3$ giving $\alpha=0$ and $\beta=3$.

Unfortunately, in general, $u_h$ and $\lambda_h$ may attain negative values
also for $\tau=3$. As an example, let us consider a hypergraph consisting of
three unit squares arranged as in Fig.~\ref{fig:values_q1_q1_3}.
\begin{figure}[t]
\centering
\begin{tikzpicture}
   \draw[line width=0.4mm]
      (0,0) -- (7.5,0) -- (7.5,2.5) -- (0,2.5) -- cycle;
   \draw[line width=0.6mm, color=blue]
      (2.5,0.15) -- (2.5,2.35) (5,0.15)   -- (5,2.35);
   \node {} (2.5,2.2) node[right] {\textcolor{blue}{$\lambda_h=\lambda_1$}};
   \node {} (2.5,0.3) node[right] {\textcolor{blue}{$\lambda_h=\lambda_2$}};
   \node {} (5,2.2) node[right] {\textcolor{blue}{$\lambda_h=\lambda_3$}};
   \node {} (5,0.3) node[right] {\textcolor{blue}{$\lambda_h=\lambda_4$}};
   \node {} (1.25,1.25) node[below] {\small $(x_{E_1},y_{E_1})$};
   \node {} (3.75,1.25) node[below] {\small $(x_{E_2},y_{E_2})$};
   \node {} (6.25,1.25) node[below] {\small $(x_{E_3},y_{E_3})$};
   \node {} (0,2.5) node[above left] {$1$};
   \node {} (2.5,2.5) node[above] {$\frac12\lambda_1$};
   \node {} (5,2.5) node[above] {$\frac12\lambda_3$};
   \node {} (7.5,2.5) node[above right] {$0$};
   \node {} (0,0) node[below left] {$0$};
   \node {} (2.5,0) node[below] {$\frac12\lambda_2$};
   \node {} (5,0) node[below] {$\frac12\lambda_4$};
   \node {} (7.5,0) node[below right] {$0$};
   \fill[black] (1.25,1.25) circle (2pt);
   \fill[black] (3.75,1.25) circle (2pt);
   \fill[black] (6.25,1.25) circle (2pt);
   \fill[black] (0,0) circle (2pt);
   \fill[black] (0,2.5) circle (2pt);
   \fill[black] (2.5,0) circle (2pt);
   \fill[black] (2.5,2.5) circle (2pt);
   \fill[black] (5,0) circle (2pt);
   \fill[black] (5,2.5) circle (2pt);
   \fill[black] (7.5,0) circle (2pt);
   \fill[black] (7.5,2.5) circle (2pt);
   \pgfusepath{fill}
\end{tikzpicture}
\caption{Counterexample for $\polynomials_1(\setEdge)\times
\polynomials_1(\setEdge)^2\times\polynomialsp_1(\setNode)$: values of
$u_h$ for $\tau=3$.}
\label{fig:values_q1_q1_3}
\end{figure}
Again, we assume \eqref{eq:counterex_assumptions}, $g_\textup D$ is given by
\eqref{eq:gD_Q1_Q1_P1} and we set $g_{{\textup D},h}=g_\textup
D|_{\Gamma_\textup D}^{}$. The solution $u_h$ is again continuous, i.e.,
$u_h\in U_h\cap C(\overline{\edge_1\cup\edge_2\cup\edge_3})$, and, at the
vertices of the hypergraph, it attains the values depicted in
Fig.~\ref{fig:values_q1_q1_3}. An explicit representation of the solution 
$\vec q_h$ can be obtained by adopting the formulas for $\vec q_h$ from the
first part of this counterexample. Then one obtains
\begin{align*}
  &(\vec q_h|_{E_1}^{}\cdot\Normal_{E_1}
  +\tau(u_h-\lambda_h))|_{\node_{12}}^{}
  +(\vec q_h|_{E_2}^{}\cdot\Normal_{E_2} 
  + \tau(u_h-\lambda_h))|_{\node_{12}}^{}\\
  &=\frac12-4(\lambda_1+\lambda_2)
  +\left(1-8(\lambda_1-\lambda_2)\right)(y-y_{E_1})
  -\frac{\lambda_3+\lambda_4}4-\frac{\lambda_3-\lambda_4}2(y-y_{E_1}),\\
  &(\vec q_h|_{E_2}^{}\cdot\Normal_{E_2}
  +\tau(u_h-\lambda_h))|_{\node_{23}}^{}
  +(\vec q_h|_{E_3}^{}\cdot\Normal_{E_3} 
  + \tau(u_h-\lambda_h))|_{\node_{23}}^{}\\
  &=-\frac{\lambda_1+\lambda_2}4-\frac{\lambda_1-\lambda_2}2(y-y_{E_1})
  -4(\lambda_3+\lambda_4)-8(\lambda_3-\lambda_4)(y-y_{E_1}),
\end{align*}
where $\node_{23}$ is the common hypernode of $\edge_2$ and $\edge_3$.
Consequently, \eqref{EQ:hdg_global} holds if and only if $\lambda_1=32/255$,
$\lambda_3=-2/255$, and $\lambda_2=\lambda_4=0$. Thus, $\lambda_h$ and $u_h$
attain negative values so that the HDG method is not positivity preserving.

It was shown in Section~\ref{sec:p0_1_p0} that using (bi)linear polynomials
for $\vec Q(\edge)$ together with constants for $U(\edge)$ and $M(\node)$ leads
to a positivity preserving method if $\tau_\edge$ are chosen appropriately.
However, it is not sufficient to consider constants only for $M(\node)$, as the
following counterexample shows.

\subsection{Counterexample for $\polynomials_1(\setEdge)\times
\polynomials_1(\setEdge)^2\times\polynomialsp_0(\setNode)$}
\label{sec:counterex_Q1_Q1_P0}

Consider again a hypergraph consisting of the two unit squares depicted in
Fig.~\ref{fig:two_squares} and assume \eqref{eq:counterex_assumptions}. 
Let $g_{{\textup D},h}(x_{E_1}-1/2,\cdot)=1$ and $g_{{\textup D},h}(x,\cdot)=0$
for $x>x_{E_1}-1/2$. Consider the HDG method \eqref{EQ:hdg_scheme} with spaces
defined using $U(\edge)=\polynomials_1(\edge)$, 
$\vec Q(\edge)=\polynomials_1(\edge)^2$, and $M(\node)=\polynomialsp_0(\node)$. 
Let $\tau_{\edge_1}=\tau_{\edge_2}=:\tau$ and denote
\begin{equation*}
   \alpha=\frac{3\tau}{4\tau+6}.
\end{equation*}
Then, setting $\lambda:=\lambda_h|_{\node_{12}}^{}$, the solution of
\eqref{EQ:hdg_primary} and \eqref{EQ:hdg_flux} is given by
\begin{equation*}
  u_h(x,y)=\left\{
    \begin{array}{cl}
       \alpha(\lambda-1)(x-x_{E_1}) + \frac{1+\lambda}4\quad
       &\text{ for all } (x,y) \in\edge_1,\\[2mm]
       -\alpha\lambda(x-x_{E_2}) + \frac\lambda4\quad
       &\text{ for all } (x,y) \in\edge_2\\
    \end{array}\right.
\end{equation*}
and
\begin{align*}
   \vec q_h|_{\edge_1}^{}(x,y)&=\begin{pmatrix} 
   1-\lambda-3(1+\lambda)(x-x_{E_1}) \\ 
   3(1+\lambda)(y-y_{E_1})+12\alpha(\lambda-1)(x-x_{E_1})(y-y_{E_1})
   \end{pmatrix},\\[1mm]
   \vec q_h|_{\edge_2}^{}(x,y)&=\begin{pmatrix} 
   \lambda-3\lambda(x-x_{E_2}) \\ 
   3\lambda(y-y_{E_2})-12\alpha\lambda(x-x_{E_2})(y-y_{E_2})\end{pmatrix}.
\end{align*}
Then
\begin{align*}
  &(\vec q_h|_{E_1}^{}\cdot\Normal_{E_1}
  +\tau(u_h|_{E_1}^{}-\lambda_h))|_{\node_{12}}^{}
  +(\vec q_h|_{E_2}^{}\cdot\Normal_{E_2} 
  + \tau(u_h|_{E_2}^{}-\lambda_h))|_{\node_{12}}^{}\\
  &=-\frac12-\frac{\tau\alpha}2+\frac\tau4
  -\lambda\left(5+\frac{3\tau}2-\tau\alpha\right)
  =-\frac{\tau^2+\tau+6}{8\tau+12}-\lambda\frac{3\tau^2+29\tau+30}{4\tau+6}
\end{align*}
and hence \eqref{EQ:hdg_global} implies that $\lambda<0$. Consequently, $u_h$
attains negative values as well so that the HDG method is not positivity
preserving.

The following counterexample demonstrates that, in contrast to the simplicial
case, the lowest-order Raviart-Thomas space on rectangles does not lead to
a positivity preserving method when $\tau_E\equiv0$.

\subsection{Counterexample for $\polynomialsp_0(\setEdge)\times
\polynomialsRT_0(\setEdge)\times\polynomialsp_0(\setNode)$ with
$\tau_E\equiv0$}
\label{sec:counter_RT}

Consider a hypergraph consisting of the two unit squares depicted in
Fig.~\ref{fig:two_squares} and assume \eqref{eq:counterex_assumptions}.
Let $g_{{\textup D},h}(x_{E_1}-1/2,\cdot)=10$ and $g_{{\textup
D},h}(x,\cdot)=0$ for $x>x_{E_1}-1/2$. It can be easily verified that the
solution of the HDG method \eqref{EQ:hdg_scheme} with $\tau_E\equiv0$ and with
spaces defined using $U(\edge)=\polynomialsp_0(\edge)$, $\vec
Q(\edge)=\polynomialsRT_0(\edge)$ (rectangular version), and
$M(\node)=\polynomialsp_0(\node)$ is given by
\begin{align*}
   &u_h|_{\edge_1}^{}=\frac94,\qquad u_h|_{\edge_2}^{}=-\frac14,\qquad
   \lambda_h|_{\node_{12}}^{}=-1,\\
   &\vec q_h|_{\edge_1}^{}(x,y)=\begin{pmatrix} 
   11-27(x-x_{E_1}) \\ 27(y-y_{E_1}) \end{pmatrix},\quad
   \vec q_h|_{\edge_2}^{}(x,y)=\begin{pmatrix} 
   -1+3(x-x_{E_2}) \\ -3(y-y_{E_2})\end{pmatrix}.
\end{align*}
Thus, the HDG method is not positivity preserving.

\section{Numerical experiments}\label{SEC:numerics}
%
We use HyperHDG \cite{RuppGK22,HyperHDGgithub} for the numerical examples concerning graphs and rectangular elements and NGsolve for the numerical examples concerning simplices.

\subsection{Experiments on the unit hypercube}
In the first set of experiments, we consider the unit hypercube $(0,1)^d$ in
$d$ dimensions as the domain which can be viewed as the only hyperedge $\edge$
of $\setEdge$, i.e., $| \setEdge | = 1$ and $|\partial \edge| = 2d$ . We set $\kappa = 1$ and $f = 0$.

\subsubsection{The lowest-order case: $U(\edge)=\polynomialsp_0(\edge)$, $\vec Q(\edge)=\polynomialsp_0(\edge)^d$ and $M(\node)=\polynomialsp_0(\node)$}
Let us define the Dirichlet boundary $\Gamma_\textup D \subset \partial \edge$
as the part of the boundary of the hypercube that does not intersect the hyperplane $x_1 = 1$ and set $u_\textup D = 1$ if $x_1 = 0$ and $u_\textup D = 1$ otherwise.

Since $\vec Q(E) = P_0(\edge)^d$, we expect that the solution remains positive
for any $\tau > 0$, and that the problem is ill-posed for $\tau = 0$. Indeed,
the local solver fails for $\tau = 0$, and for $\tau > 0$, we receive the
graph in Fig.~\ref{fig:lowest_order} for the (constant) value of $\lambda_h$ at $x_1 = 1$. 
\begin{figure}[ht]
 \includegraphics[width=.9\textwidth]{pictures/pos_tau_const}
 \caption{The value of $\lambda_h$ at $x_1 = 1$ (ordinate) for $\tau > 0$
(abscissa) in the case $U(\edge)=\polynomialsp_0(\edge)$, $\vec
Q(\edge)=\polynomialsp_0(\edge)^d$ and
$M(\node)=\polynomialsp_0(\node)$.}\label{fig:lowest_order}
\end{figure}

\subsubsection{Multilinear flux approximation: $U(\edge)=\polynomialsp_0(\edge)$,
$\vec Q(\edge)=\polynomials_1(\edge)^d$ and $M(\node)=\polynomialsp_0(\node)$}
We consider the same setting as in the previous example. We expect that the solution remains positive for sufficiently large $\tau$ (for $\tau \ge \tfrac{2 \kappa}h$ if $d=2$), and that the problem is well-posed for $\tau = 0$. Indeed, the local solver succeeds in that case, and we receive the graph in Fig.~\ref{fig:multilinear_flux} for the (constant) value of $\lambda_h$ at $x_1 = 1$. Additionally, we deduce that our theoretical bound $\tau \ge 2$ for $d=2$ is sharp, and suspect that for dimensions $d \ge 2$, we need to ensure $\tau \ge 6$ to guarantee positivity.
\begin{figure}[ht]
 \includegraphics[width=.9\textwidth]{pictures/pos_tau_linq}
 \caption{The value of $\lambda_h$ at $x_1 = 1$ (ordinate) for $\tau > 0$
(abscissa) in the case $U(\edge)=\polynomialsp_0(\edge)$, $\vec
Q(\edge)=\polynomials_1(\edge)^d$ and
$M(\node)=\polynomialsp_0(\node)$.}\label{fig:multilinear_flux}
\end{figure}

\subsubsection{Multilinear primal approximation: $U(\edge)=\polynomials_1(\edge)$, $\vec Q(\edge)=\polynomialsp_0(\edge)^d$ and $M(\node)=\polynomials_1(\node)$}
Selecting multilinear functions for the primal and skeletal unknowns yields a methods that requires $\tau > 0$. Conversely to the other cases, we observe in Fig.~\ref{fig:multilinear_primal} that the mean of $\lambda_h$ on the Neumann boundary remains nonnegative for small $\tau$ and decreases as $\tau$ increases.
\begin{figure}[ht]
 \includegraphics[width=.9\textwidth]{pictures/pos_tau_linu}
 \caption{The mean of $\lambda_h$ at $x_1 = 1$ (ordinate) for $\tau > 0$
(abscissa) in the case $U(\edge)=\polynomials_1(\edge)$, $\vec
Q(\edge)=\polynomialsp_0(\edge)^d$ and
$M(\node)=\polynomials_1(\node)$.}\label{fig:multilinear_primal}
\end{figure}

\subsection{Raviart-Thomas approximation on simplices:
$U(\edge)=\polynomialsp_0(\edge)$, $\vec Q(\edge)=\polynomialsRT_0(\edge)$ and
$M(\node)=\polynomialsp_0(\node)$}
As a reference domain we consider the unit hypercube $\hat \Omega := (0,1)^d$ and a structured 
triangulation with $8$ elements for $d = 2$ and $48$ elements for $d = 3$. Defining the mapping 
\begin{align*}
   \phi[\theta]: \hat \Omega \to \Omega, \quad (\hat x, \hat y) \mapsto (x,y) =
(\hat x, \hat y + \theta \hat x) \qquad \text{for } d = 2, \\
   \phi[\theta]: \hat \Omega \to \Omega, \quad (\hat x, \hat y, \hat z) \mapsto
(x,y,z) = (\hat x, \hat y + \theta \hat x, \hat z + \theta \hat x) \qquad \text{for } d = 3,
\end{align*}
with $\theta \in [0,2]$, we then set $\Omega = \phi[\theta](\hat \Omega)$. In
Fig.~\ref{fig:triangulation} we have depicted the reference configuration
($\theta = 0$) and the mapped domain for $\theta = 1$ in the two-dimensional
case. On the reference domain we set the Dirichlet boundary to $\hat \Gamma_D =
\partial \hat \Omega \setminus  \hat \Gamma_N$ where $\hat \Gamma_N := \{1\}
\times (0,1)$ for $d=2$ and $\hat \Gamma_N := \{1\} \times (0,1)^2$ for $d=3$,
and set $\Gamma_D = \phi[\theta](\hat \Gamma_D)$. We consider a constant
boundary value $u_D = 1$ on $\phi[\theta](\{0\} \times (0,1))$ for $d=2$ and on
$\phi[\theta](\{0\} \times (0,1)^2)$ for $d=3$, and set $u_D = 0$ elsewhere.
\begin{figure}
\begin{center}
\begin{tikzpicture}[scale = 3]
\draw (0.5,0) -- (0.5,1); 
\draw (0,0.5) -- (1,0.5); 

\coordinate (A) at (0,1);
\coordinate (B) at (0.5,0.5);
\coordinate (C) at (0.5,1);

\pic["$\alpha$", draw, fill=yellow,fill opacity=1.0,angle radius=8mm]
      {angle=C--B--A};

\draw (0,0.5) -- (0.5,0); 
\draw[draw=orange] (0.5,1) --node[below]{$\hat N$~~~~}  (1,0.5); 
\draw (0.5,0.5) -- (1,0); 
\draw (0,1) -- (0.5,0.5); 
\node[below] at (0.5,0) {$\hat \Omega$};

\draw[thick] (0,0) -- (1,0) -- (1,1) -- (0,1) -- cycle;

\draw[-{Latex[length=3mm]}] (1.2,0.5) to[bend left=20] node[above]{$\phi[1]$} (1.8,0.5);

\begin{scope}[xshift=2cm, yshift=-0.5cm]
   \draw (0.5,0.5) -- (0.5,1.5); 
   \draw (0,0.5) -- (1,1.5); 

   \coordinate (A) at (0,1);
   \coordinate (B) at (0.5,1);
   \coordinate (C) at (0.5,1.5);

   \pic["$\alpha$", draw, fill=yellow,fill opacity=1.0,angle radius=8mm]
         {angle=C--B--A};

   \draw (0,0.5) -- (0.5,0.5); 
   \draw (0,1) --  (0.5,1); 
   \draw (0.5,1) -- (1,1); 
   \draw[draw=orange] (0.5,1.5) --node[below]{$N$} (1,1.5); 

   \node[] at (0.8,0.2) {$\Omega = \phi[1](\hat \Omega)$};
   \draw[thick] (0,0) -- (1,1) -- (1,2) -- (0,1) -- cycle;
\end{scope}
\end{tikzpicture}
\end{center}
\caption{The reference configuration on $\hat \Omega$ and the mapped triangulation and domain for $\theta = 1$.}\label{fig:triangulation}
\end{figure}

To motivate the transformation via $\phi[\theta]$ (for simplicity discussed
only for $d=2$), note that for example the angle $\alpha$ given in
Fig.~\ref{fig:triangulation} at the node in the center increases for an
increasing value of $\theta$. We observe that for $\theta = 1$ the angle is
$\alpha = \pi/2$ and gets bigger for $\theta > 1$. Correspondingly, condition
\eqref{IT:cond_angle} is fulfilled for all $\theta \in [0,1]$ and violated for
$\theta > 1$, which should have a direct impact on the positivity. To validate
the theory, the orange curve in the upper plot of Fig.~\ref{fig:trafoRT} shows
the value of $\lambda_h$ on the orange marked hypernode $N = \phi[\theta](\hat
N)$ where $\hat N$ is the reference hypernode between the points $(0.5,1)$ and
$(1,0.5)$. Similarly, the blue curve in the upper plot of
Fig.~\ref{fig:trafoRT} shows the value of $\lambda_h$ on a corresponding
hypernode for $d=3$. As predicted by the theory, the value of $\lambda_h$
gives negative values for $\theta > 1$ but is strictly positive for $\theta \le
1$ and $d = 2$. Since in three dimensions the negative value is not so clearly
visible, the lower plot shows the values for $\theta \in [0.8,1.2]$.

\pgfplotstableread{./pictures/data/ngs_trafoRT_2.data} \trafoRT
\pgfplotstableread{./pictures/data/ngs_trafoRT_3.data} \trafoRTthree

\begin{figure}[ht]
\begin{center}   
\begin{tikzpicture}
   \begin{axis}[width=\textwidth,height=0.3\textwidth, xmin = 0, xmax = 2, ymin=-0.1, ymax=0.15, ytick={-0.1,-0.05,0.0,0.05,0.1,0.15},y tick label style={
      /pgf/number format/.cd,
      fixed,
      fixed zerofill,
      precision=2,
      /tikz/.cd
  }]
    \addplot[thick, color=orange] table[x=0, y=1]{\trafoRT}; 
    \addlegendentry{$\lambda_h|_N^{} (d = 2)$}
    \addplot[thick, color=blue] table[x=0, y=1]{\trafoRTthree}; 
    \addlegendentry{$\lambda_h|_N^{} (d = 3)$}
    \draw[dotted] ({rel axis cs:1,0}|-{axis cs:2,0}) -- ({rel axis cs:0,0}|-{axis cs:0,0});
   \end{axis}
  \end{tikzpicture}
  \begin{tikzpicture}
   \begin{axis}[width=\textwidth,height=0.3\textwidth, xmin = 0.8, xmax = 1.2, ymin=-0.005, ymax=0.005,y tick label style={
      /pgf/number format/.cd,
      fixed zerofill,
      precision=2,
      /tikz/.cd
  }]
    \addplot[thick, color=blue] table[x=0, y=1]{\trafoRTthree}; 
    \addlegendentry{$\lambda_h|_N^{} (d = 3)$}
    \draw[dotted] ({rel axis cs:1,0}|-{axis cs:2,0}) -- ({rel axis cs:0,0}|-{axis cs:0,0});
   \end{axis}
  \end{tikzpicture}
\end{center}
   \caption{The value of $\lambda_h$ on the hypernode $N$ for varying values of
$\theta$ for $d=2$ (top) and $d=3$ (top and bottom).}\label{fig:trafoRT}
  \end{figure}

\subsection{The lowest-order case on quadrilaterals:
  $U(\edge)=\polynomialsp_0(\edge)$, $\vec Q(\edge)=\polynomialsp_0(\edge)^d$ and
  $M(\node)=\polynomialsp_0(\node)$}
  We use the same setting as in the previous example but now consider a quadrilateral mesh. Let $d = 2$ and set $\Omega = \phi[1.5](\hat  \Omega)$. We consider a structured mesh into $10 \times 10$ quadrilaterals on $\hat \Omega$ which are mapped accordingly. Note that we are in the setting where condition \eqref{IT:cond_angle} is not fulfilled, i.e. we demand a stabilization according to condition \eqref{EQ:tau_cond}. It can be easily seen that for a structured mesh we have for all elements $E$ with $\kappa = 1$ that 
  \begin{align*}
   |\partial E| = 0.2 + 2 \sqrt{0.1^2 + 0.05^2}
   \quad \text{and} \quad 
   \int_E \kappa^{-1} = |E| = 0.1^2,
  \end{align*}
  thus \eqref{EQ:tau_cond} reads as $\tau_E \ge 56.05.$   In Figure \ref{fig:trafoLO} we have plotted the value of the Lagrange multiplier $\lambda_h$ on the most right vertical hypernode $N$ for varying values $\tau_E \in [1,100]$. As we can see, the value of $\lambda_h$ is negative for approximately $\tau_E < 30$ and positive otherwise. This is in accordance with the theoretical prediction by \eqref{EQ:tau_cond} and validates the theory. 

  \pgfplotstableread{./pictures/data/ngs_trafoLO_2.data} \trafoLO

\begin{figure}[ht]
   \begin{center}   
   \begin{tikzpicture}
      \begin{axis}[width=\textwidth,height=0.3\textwidth, xmin = 1, xmax = 100, 
         y tick label style={
         /pgf/number format/.cd,
         fixed,
         fixed zerofill,
         precision=2,
         /tikz/.cd
     }]
       \addplot[thick, color=orange] table[x=0, y=1]{\trafoLO}; 
       \addlegendentry{$\lambda_h|_N (d = 2)$}
       \draw[dotted] ({rel axis cs:1,0}|-{axis cs:2,0}) -- ({rel axis cs:0,0}|-{axis cs:0,0});
      \end{axis}
     \end{tikzpicture}
   \end{center}
      \caption{The value of $\lambda_h$ on the hypernode $N$ for varying values of $\theta$ for $d=2$.}\label{fig:trafoLO}
     \end{figure}

\section{Conclusion}
We have shown that only a few low-order HDG methods on hypergraphs 
are positivity preserving. For nonobtuse simplicial meshes, this was proved for
the lowest-order Raviart-Thomas discretization with the stabilization parameter
$\tau=0$ and for piecewise constant approximations of all variables if
$\tau=\mathcal{O}(1)$. For general meshes (also nonsimplicial ones),
positivity preservation for piecewise constant approximations can be restored if $\tau=\mathcal{O}(1/h)$ is selected. This choice
of $\tau$ leads to positivity preservation also for a few other discretizations
as the present paper shows. However, for most HDG methods, the positivity
can be violated for any choice of $\tau$.

This work represents a first step towards a better understanding of the positivity preservation of the HDG method. Future work will focus on developing a general theory for the positivity preservation of the HDG method on general meshes and the development of high-order, positivity preserving HDG methods. While there is only little hope to find off-the-shelf high order HDG methods that are positivity preserving, we hope that our approach can be combined with flux-corrected transport (FCT) or monolithic convex limiting (MCL) approaches (see \cite{KuzminH24} for an overview) to obtain high order HDG methods that are positivity preserving. Since FCT and MCL need (at least) a consistent low-order scheme, our lowest order methods provide a cornerstone in the development of such methods.

\section*{Acknowledgment}

The work of Petr Knobloch was partly supported through the grant No. 22-01591S
of the Czech Science Foundation.
A.\ Rupp has been supported by the Research Council of Finland's decision numbers 350101, 354489, 359633, 358944, and Business Finland's project number 539/31/2023.

\printbibliography
\end{document}